\newcommand{\al}{\alpha}
\newcommand{\de}{\delta}
\newcommand{\ep}{\varepsilon}
\newcommand{\T}{{\Theta}}
\newcommand{\partialtau}[2]{\partial_{\tau_{#1 #2}}}
\newcommand\tc[2]{\theta\left[\begin{smallmatrix}#1\\ #2\end{smallmatrix}\right]}
\newcommand{\sm}[2]{\left(\begin{smallmatrix}#1\\#2\end{smallmatrix}\right)}
\newcommand{\bes}{\begin{equation*}}
\newcommand{\ees}{\end{equation*}}
\newcommand{\smaq}{\left[ \begin{smallmatrix}}
\newcommand{\smcq}{\end{smallmatrix}\right]}
\newcommand{\smat}{\left( \begin{smallmatrix}}
\newcommand{\smct}{\end{smallmatrix}\right)}
\def\pmatrix{\left(\begin{matrix}}
\def\endpmatrix{\end{matrix}\right)}
\newcommand{\CC}{{\mathbb{C}}}
\newcommand{\HH}{{\mathcal{H}}}
\newcommand{\FF}{{\mathbb{F}}}
\newcommand{\EE}{{\mathbb{E}}}
\newcommand{\PP}{{\mathbb{P}}}
\newcommand{\RR}{{\mathbb{R}}}
\newcommand{\ZZ}{{\mathbb{Z}}}
\newcommand{\NN}{{\mathbb{N}}}
\newcommand{\calO}{{\mathcal O}}
\newcommand{\calH}{{\mathcal H}}
\newcommand{\calA}{{\mathcal A}}
\newcommand{\op}{\operatorname}
\newcommand{\Sp}{\op{Sp}}
\newcommand{\GL}{\op{GL}}
\newcommand{\Sym}{\op{Sym}}
\newcommand{\odd}{\op{odd}}
\newcommand\grad{\op{grad}}
\newcommand\tr{\op{Tr}}
\theoremstyle{plain}
\newtheorem{thm}{Theorem}
\newtheorem{lm}[thm]{Lemma}
\newtheorem{prop}[thm]{Proposition}
\theoremstyle{definition}
\newtheorem{rem}[thm]{Remark}
\newcommand\tch[2]{{\theta\left[\begin{matrix}{#1}\\ {#2}\end{matrix}\right]}}
\newcommand\stch[2]{{\theta\left[\begin{smallmatrix}{#1}\\ {#2}\end{smallmatrix}\right]}}
\newcommand{\ch}[2]{\left[\begin{smallmatrix}#1\\#2\end{smallmatrix}\right]}
\begin{document}
\title{Vector-valued modular forms and the Gauss map  }
\author[Dalla Piazza]{Francesco Dalla Piazza}
\email{dallapiazza@mat.uniroma1.it}
\author[Fiorentino]{Alessio Fiorentino}
\email{fiorentinoalessio@alice.it }
\author[Grushevsky]{Samuel Grushevsky}
\email{sam@math.sunysb.edu}
\thanks{Research of the third author is supported in part by National Science Foundation under the grant DMS-12-01369.}
\thanks{Research of the remaining authors is supported in part by PRIN  and Progetto di Ateneo dell' Universit\`a  La Sapienza: "Spazi di Moduli e Teoria di Lie"}
\author[Perna]{Sara Perna}
\email{perna@mat.uniroma1.it}
\author[Salvati~Manni]{Riccardo Salvati Manni}
\email{salvati@mat.uniroma1.it}
\address{Mathematics Department, Stony Brook University,
Stony Brook, NY 11794-3651, USA}
\address{Dipartimento di Matematica, Universit\`a di Roma ``La Sapienza'', Piazzale Aldo Moro, 2, I-00185 Roma, Italy}

\begin{abstract}
We use the gradients of theta functions at odd two-torsion points --- thought of as vector-valued modular forms --- to construct holomorphic differential forms on the moduli space of principally polarized abelian varieties, and to characterize the locus of decomposable abelian varieties in terms of the Gauss images of two-torsion points.
\end{abstract}
\maketitle

\section*{Introduction}
The geometry of Siegel modular varieties --- the quotients of the Siegel upper half-space $\calH_g$ by discrete groups --- has been under intense investigation for the last forty years, with various results about their birational geometry, compactifications, and other properties. Some of the first results in this direction are due to Freitag, who in \cite{freitagholdiff1,freitagholdiff2} showed that some Siegel modular varieties are not unirational   by constructing non-zero differential forms on them. This proof requires two ingredients: suitably compactifying the variety and arguing that the differential form extends, and actually constructing the differential forms. Freitag proved the appropriate general extension result for differential forms.  Thanks to \cite{amrtbook} and \cite{Tai} and much subsequent work the theory of compactifications of locally symmetric domains and the extension of differential forms is now well-known in full generality.

In this paper we focus on the original problem of constructing differential forms on Siegel modular varieties.  We recall that differential forms on Siegel modular varieties can be constructed from suitable vector-valued modular forms. In general vector-valued modular forms can be defined by theta series with pluriharmonic coefficients, but in general the question of whether the series thus constructed are identically zero is very complicated.  General results on the existence and  non-vanishing of holomorphic differential forms can be found in \cite{weissauervector} and \cite{weissauerdivisors}.  In  connection  with the possibility of finding special divisors in the Siegel modular varieties in the sense of Weissauer \cite{weissauerdivisors}  we will restrict our attention to non-zero differential forms of degree one less than the top.

Freitag in \cite{freitagkorper} constructed such forms on $\calA_g$ for $g\equiv 1\pmod 8$, for $g\ge 17$, while the fifth author in \cite{smholdiff} gave a completely different construction for $g\equiv 1\pmod 4$, $g\ne1,5,13$. In this paper we present an easier and more natural method of constructing such differentials forms, providing also a natural bridge between methods of \cite{freitagkorper} and \cite{smholdiff}. Our tools will be the gradients of theta functions and expressions in terms of them considered by the third and fifth author in \cite{grsmodd1,grsmodd2}. Our result is the following.

Denote by $\partial:=\left( \frac{(1+\delta_{ij})}{2}\partial_{\tau_{ij}}\right)$ the matrix of partial derivatives with respect to $\tau$. Let $f,h$ be two scalar modular forms of the same weight for some modular group $\Gamma$ acting on $\calH_g$. Then $A:=h^2\partial(f/h)$ is a matrix-valued modular form. Denote by $A^{ad}$ the adjoint matrix of $A$ (the transpose of the matrix of cofactors), and denote by $d\check{\tau}_{ij}$ the wedge product of all $d\tau_{ab}$ for $1\le a\le b\le g$ except $d\tau_{ij}$, with the suitable sign. Denote by $d\check\tau$ the matrix of all $d\check{\tau}_{ij}$. Then
\begin{thm}\label{thm:holdiff} Let $g \geq 2$, let $f:=\T[\ep](\tau)$ and $h:=\T[\de](\tau)$ be second order theta constants. Then the modular form constructed as above,
$$
 \omega:=\tr(A_{\ep, \de}^{ad} d\check\tau)
$$
is a non-zero holomorphic differential form on $\calA_g(\Gamma):=\calH_g/\Gamma$ in degree one less than the top (i.e.~of degree $g(g+1)/2-1$). Here for $g$ odd we have $\Gamma=\Gamma_g(2,4)$, while for $g$ even it is an index two subgroup $\Gamma_g^*(2,4)\subset\Gamma_g(2,4)$.
\end{thm}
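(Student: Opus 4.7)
The plan is to check, in order: (i) $A:=h\,\partial f-f\,\partial h=h^2\partial(f/h)$ is a matrix-valued modular form of a computable type on $\Gamma_g(2,4)$; (ii) the scalar form $\omega=\tr(A^{ad}\,d\check\tau)$ is invariant under the announced subgroup $\Gamma$, hence descends to a holomorphic $(g(g+1)/2-1)$-form on $\calA_g(\Gamma)$; and (iii) $\omega\not\equiv 0$.

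For (i), since $f=\T[\ep]$ and $h=\T[\de]$ share weight $\tfrac12$ and theta character on $\Gamma_g(2,4)$, the ratio $f/h$ is a $\Gamma_g(2,4)$-invariant meromorphic function on $\calH_g$. The classical formula for the gradient of such an invariant scalar gives $\partial(f/h)(\gamma\tau)=(C\tau+D)\,\partial(f/h)(\tau)\,(C\tau+D)^t$ for $\gamma=\smat A&B\\C&D\smct\in\Gamma_g(2,4)$. Combined with the weight-one transformation of $h^2$ (say with character $\chi$), this yields
\[
A(\gamma\tau)=\chi(\gamma)\,\det(C\tau+D)\,(C\tau+D)\,A(\tau)\,(C\tau+D)^t,
\]
so $A$ is matrix-valued modular of weight one with representation $\Sym^2$ of the standard.

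For (ii), the adjugate of a $g\times g$ matrix transforming as $X\mapsto cMXM^t$ transforms as $X^{ad}\mapsto c^{g-1}(\det M)^2 M^{-t}X^{ad}M^{-1}$, so $A^{ad}$ acquires the scalar $\chi^{g-1}\det(C\tau+D)^{g+1}$ together with $(C\tau+D)^{-t}$ on the left and $(C\tau+D)^{-1}$ on the right. On the other hand, from $d\tau(\gamma\tau)=(C\tau+D)^{-t}d\tau(C\tau+D)^{-1}$ and the weight $g+1$ of the top volume form, a direct bookkeeping gives
\[
d\check\tau(\gamma\tau)=\det(C\tau+D)^{-(g+1)}(C\tau+D)\,d\check\tau(\tau)\,(C\tau+D)^t.
\]
Multiplying and taking the trace, all factors of $(C\tau+D)$ and all powers of $\det(C\tau+D)$ cancel, leaving $\omega(\gamma\tau)=\chi(\gamma)^{g-1}\omega(\tau)$. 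For $g$ odd, $g-1$ is even and $\chi^{g-1}\equiv 1$ on $\Gamma_g(2,4)$; for $g$ even, $\chi^{g-1}=\chi$ is a nontrivial quadratic character whose kernel is the index two subgroup $\Gamma_g^*(2,4)$. Holomorphy on $\calA_g(\Gamma)$ is automatic because $A$ and $d\check\tau$ are polynomial in theta constants and their first $\tau$-derivatives, hence holomorphic on $\calH_g$.

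The main obstacle is (iii), the non-vanishing. Since $\omega=\sum_{i,j}(A^{ad})_{ji}\,d\check\tau_{ij}$, it is enough to exhibit a single $\tau_0\in\calH_g$ at which $\det A(\tau_0)\ne 0$. I would take $\tau_0=\diag(\tau_1,\dots,\tau_g)$ with $\tau_i\in\calH_1$, and choose $\ep,\de$ with $\ep_i\ne\de_i$ in every component. Second order theta constants then factor as $\T[\ep](\tau_0)=\prod_i\T[\ep_i](\tau_i)$, the off-diagonal $\tau$-derivatives vanish, and a direct calculation reduces $A$ at $\tau_0$ to a diagonal matrix whose $i$-th entry is the genus-one Wronskian $\T[\de_i]\partial_{\tau_i}\T[\ep_i]-\T[\ep_i]\partial_{\tau_i}\T[\de_i]$ times the generically nonzero factor $\prod_{j\ne i}\T[\ep_j](\tau_j)\T[\de_j](\tau_j)$. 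Each genus-one Wronskian is not identically zero because the two genus-one second order theta constants $\T[0]$ and $\T[1]$ are linearly independent on $\calH_1$, a fact that can also be read off from the identities for gradients of theta at odd two-torsion points in \cite{grsmodd1,grsmodd2}. Hence $\det A\not\equiv 0$, and therefore $\omega\not\equiv 0$.
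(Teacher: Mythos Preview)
Your modularity argument in parts (i) and (ii) is correct and essentially the same as the paper's (the paper is terser, simply recording the highest weight $(3,1,\dots,1)$ for $A_{\ep\de}$ and $(g+1,\dots,g+1,g-1)$ for the adjugate, but the content is identical).

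The gap is in (iii). The theorem asserts non-vanishing for \emph{every} pair $\ep\ne\de$, but your diagonal degeneration only proves it for the single pair with $\ep_i\ne\de_i$ in every coordinate, i.e.\ $\ep+\de=(1,\dots,1)$. Indeed, your own computation shows that at a diagonal $\tau_0$ the matrix $A_{\ep\de}(\tau_0)$ is diagonal with $i$-th entry
\[
\Big(\prod_{j\ne i}\T[\ep_j](\tau_j)\T[\de_j](\tau_j)\Big)\cdot
\big(\T[\ep_i]\partial_{\tau_i}\T[\de_i]-\T[\de_i]\partial_{\tau_i}\T[\ep_i]\big),
\]
and the genus-one Wronskian factor is identically zero whenever $\ep_i=\de_i$. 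Hence as soon as $\ep$ and $\de$ agree in a single coordinate, $\det A_{\ep\de}(\tau_0)=0$ for every diagonal $\tau_0$, and your test point gives no information. Since the $\T[\sigma]$ do not simply permute under $\Gamma_g$ (they transform by the Weil representation), you cannot reduce the general case to this one by a change of variables.

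The paper's non-vanishing argument is different and uniform in $\ep\ne\de$: one picks $\tau$ with $\T[\ep](\tau)=0\ne\T[\de](\tau)$, so that $A_{\ep\de}(\tau)=-\T[\de](\tau)\cdot\big(\partial_{z_i}\partial_{z_j}\T[\ep](\tau,0)\big)_{i,j}$. Writing $Z=2\tau$, the point $Z\ep/2$ is then an \emph{even} two-torsion point on $\Theta_Z$, hence a singular point; by \cite{grsmconjectures} it is generically an ordinary double point, which via the heat equation says exactly that this Hessian has full rank. Thus $\det A_{\ep\de}(\tau)\ne0$. If you want to salvage a degeneration approach, you would need to move beyond fully diagonal period matrices and produce, for each $\ep\ne\de$, a point where $\det A_{\ep\de}$ can be controlled; this is not immediate.
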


In what follows we will discuss the relation of special cases of this construction to those of Freitag \cite{freitagholdiff2} and the fifth author \cite{smholdiff}.  In a related direction,  we revisit the method of constructing vector-valued modular forms using gradients of odd theta functions with half integral characteristics.  Recall that the gradients at $z=0$ of  odd theta functions with  half integral characteristic can be thought of as the images of two-torsion points that are smooth points of the  theta divisor under the Gauss map. In this direction, we obtain an analytic proof of the following geometric statement.
\begin{thm}\label{thm:decomposable}
A principally polarized abelian variety is decomposable (i.e.~is a product of lower-dimensional ones) if and only if the  images under the Gauss map of  all smooth two-torsion points in the theta divisor lie on a quadric in $\PP^{g-1}$.
\end{thm}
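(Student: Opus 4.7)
The plan is to prove both implications of Theorem~\ref{thm:decomposable} in turn.

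For the easier direction (decomposable implies quadric), after a symplectic change of basis one may assume $\tau=\smat\tau_1 & 0\\ 0 & \tau_2\smct$ with $\tau_i\in\calH_{g_i}$ and $g_1+g_2=g$. Writing $z=(z_1,z_2)$ and any characteristic $m=(m_1,m_2)$ split along the blocks, every theta function factors as $\theta[m](z,\tau)=\theta[m_1](z_1,\tau_1)\,\theta[m_2](z_2,\tau_2)$, and oddness of $m$ forces exactly one of the blocks $m_i$ to be odd. If $m_1$ is odd, the gradient at $z=0$ reduces to
\bes
\nabla_z\theta[m](0,\tau)\;=\;\bigl(\theta[m_2](0,\tau_2)\cdot\nabla_{z_1}\theta[m_1](0,\tau_1),\;0\bigr),
\ees
whose projectivization lies in the linear subspace $L_1\cong\PP^{g_1-1}\subset\PP^{g-1}$ defined by the vanishing of the last $g_2$ coordinates. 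Symmetrically, the other odd characteristics give Gauss images in the complementary subspace $L_2\cong\PP^{g_2-1}$. The union $L_1\cup L_2$ of two disjoint complementary linear subspaces is contained in each of the $g_1g_2$ rank-two bilinear quadrics $x_iy_j=0$ ($1\le i\le g_1<j\le g$), producing a quadric through all the Gauss images.

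For the converse, write $P_m:=\nabla_z\theta[m](0,\tau)\in\CC^g$, so that the Gauss image of the smooth two-torsion point $z_m$ is $[P_m]\in\PP^{g-1}$. The hypothesis that all such $[P_m]$ lie on a quadric $Q=(q_{ij})$ translates to $\tr(Q\cdot P_m P_m^T)=0$ for every odd $m$; equivalently, the rank-one symmetric matrices $\{P_m P_m^T\}_{m\text{ odd}}$ fail to span $\Sym^2\CC^g$. Using Riemann's theta formula, expand $\theta[m](z,\tau)^2=\sum_\sigma c_{m,\sigma}(\tau)\,\T[\sigma](z,\tau)$ in the basis of second-order theta functions, with coefficients $c_{m,\sigma}(\tau)$ explicit in terms of second-order theta constants. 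Since $\theta[m]$ is odd in $z$, the $z$-Hessian of $\theta[m]^2$ at the origin is $2\,P_m P_m^T$; applying the heat equation $\partial_{z_i}\partial_{z_j}\T[\sigma](0,\tau)=\pi i(1+\delta_{ij})\partial_{\tau_{ij}}\T[\sigma](0,\tau)$ and contracting with $Q$ converts the quadric condition into
\bes
\sum_\sigma c_{m,\sigma}(\tau)\,D_Q\T[\sigma](0,\tau)=0\qquad\text{for every odd }m,
\ees
where $D_Q:=\sum_{ij}(1+\delta_{ij})q_{ij}\partial_{\tau_{ij}}$ is the first-order differential operator on $\calH_g$ associated to $Q$. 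Since $\theta[m](0,\tau)^2=0$ for odd $m$, the vector $(\T[\sigma](0,\tau))_\sigma$ already lies in the kernel of the coefficient matrix $(c_{m,\sigma}(\tau))_{m\text{ odd},\sigma}$; assuming this kernel is one-dimensional, the system forces $D_Q\T[\sigma](0,\tau)=\lambda(\tau)\T[\sigma](0,\tau)$ for some common scalar $\lambda(\tau)$, which is precisely the condition that $Q\in\Sym^2\CC^g\cong T^*_\tau\calH_g$ is annihilated by the differential of the projective second-order theta map $\tau\mapsto[\T[\sigma](0,\tau)]_\sigma\in\PP^{2^g-1}$.

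At a decomposable $\tau=\diag(\tau_1,\tau_2)$ the factorization $\T[\sigma_1,\sigma_2](0,\tau)=\T[\sigma_1](0,\tau_1)\T[\sigma_2](0,\tau_2)$ makes the mixed $\tau_{ij}$-derivatives vanish whenever $i,j$ lie in distinct blocks, producing a $g_1g_2$-dimensional off-diagonal kernel matching the normal directions to the decomposable locus, which recovers the first direction infinitesimally. The \emph{main obstacle} is then the converse at the level of differentials: showing that at every indecomposable $\tau$ this kernel is trivial, so no nonzero $Q$ can satisfy the system. This is achievable either by invoking the known immersion property of the second-order theta embedding on the indecomposable part of $\calA_g$, or by a direct modular-forms computation identifying the rank-drop locus of the map $Q\mapsto(Q(P_m))_m$ as cut out by modular forms whose zero set coincides with the closure of the decomposable locus in $\calA_g$.
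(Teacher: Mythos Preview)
Your forward direction is essentially identical to the paper's. For the converse, your overall strategy---reduce the quadric condition to a rank drop of the differential of the second-order theta map $\tau\mapsto[\Theta[\sigma](\tau)]$, and then invoke the known fact that this differential is injective exactly on the indecomposable locus---is sound and is precisely the content the paper uses (Theorem~\ref{thm:decchar}). However, you have left one genuine step unfinished: you write ``assuming this kernel is one-dimensional'' for the coefficient matrix $(c_{m,\sigma})$, but do not justify it. This is not automatic and is where the actual work lies.

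In fact the kernel statement is true and can be proved directly: writing out $c_{m,\sigma}$ from the bilinear relation and Fourier-transforming over $\delta$ for each fixed $\ep$, one sees that a vector $(v_\sigma)$ lies in the kernel if and only if $\Theta[\alpha]v_\beta=\Theta[\beta]v_\alpha$ for all $\alpha,\beta\in\FF_2^g$; since not all $\Theta[\sigma](\tau)$ vanish, this forces $v_\sigma=\lambda\Theta[\sigma]$. The paper achieves exactly this in a cleaner, more explicit way: rather than analysing the kernel abstractly, it uses the inversion formula~\eqref{AintermsofC} to pass from $\tr(Q\,C_m)=0$ for all odd $m$ directly to $\tr(Q\,A_{\alpha\beta})=0$ for all $\alpha\ne\beta$, which via the definition of $A_{\alpha\beta}$ and the heat equation is literally the identity $\Theta[\alpha]\,D_Q\Theta[\beta]=\Theta[\beta]\,D_Q\Theta[\alpha]$. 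A short linear-algebra lemma (the rows of $M(\tau)$ satisfy $M_0\wedge M_{ij}={\bf A}_{ij}$) then converts this into the rank drop of $M(\tau)$, and Theorem~\ref{thm:decchar} finishes. So your route and the paper's are the same argument; the paper's packaging via $A_{\alpha\beta}$ and~\eqref{AintermsofC} simply closes the gap you flagged without a separate kernel computation.
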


The structure of the paper is as follows.  In section 1 we recall some basic  facts about theta functions and vector-valued  modular forms. In section 2 we collect  several results about gradients of odd theta  functions. In section 3 we  prove Theorem
\ref{thm:decomposable}.  In section 4 we recall and improve results of Freitag and the fifth author about  holomorphic  differential forms on the Siegel  varieties. Finally, in section 5 we prove theorem \ref{thm:holdiff} and explain the relation among the approaches to constructing differential forms on Siegel modular varieties.

\section*{Acknowledgements}
The third author would like to thank Universit\`a Roma La Sapienza for hospitality in March 2015, when some of the work for this paper was completed.

\section{Definitions and notation}
We use the standard definitions and notation in working with complex principally polarized abelian varieties (ppav), as used in \cite{grsmodd1}, which we now quickly summarize.

\subsection{Siegel modular forms}
Let $\HH_g$ be the Siegel upper-half-space of degree $g$, namely the space of $g \times g$ complex symmetric matrices with positive definite imaginary part.
The symplectic  group $\Sp(2g, \RR)$ acts transitively on $\calH_g$ as
$$
 \gamma\cdot\tau= (A\tau+B) ( C\tau+D)^{-1}\quad {\rm where}\quad \gamma=\pmatrix A&B\cr
  C&D\endpmatrix,
$$
where $A,\,B,\,C,\,D$ are the $g\times g$ blocks of the matrix $\gamma$. We will keep this block notation for a symplectic matrix throughout the paper.

The Siegel modular group is $\Gamma_g:=\Sp(2g,\ZZ)$.
The principal congruence subgroup of level $n\in\NN$ is defined as:
$$
\Gamma_g(n):=\left\lbrace \gamma\in\Gamma_g\,|\,\gamma\equiv1_{2g}\ {\rm mod}\ n\right\rbrace.
$$
A subgroup of finite index in $\Gamma_g$ is called a congruence subgroup of level $n$ if it contains $\Gamma_g(n)$. Notice that if $g>1$ every subgroup of finite index is a congruence subgroup.
The Siegel modular varieties obtained by taking the quotients with respect to the action of congruence subgroups are of central importance in the theory of principally polarized abelian varieties (ppav), as they define moduli spaces of ppav with suitable level structures.

More precisely, an element $\tau\in\calH_g$  defines the complex abelian variety $X_\tau:=\CC^g/\ZZ^g+\tau
\ZZ^g$, hence $\tau$ is usually called a period
matrix of the abelian variety $X_{\tau}$.
The quotient of $\calH_g$ by
the action of the Siegel modular group is classically known to be the moduli space of ppav: $\calA_g:=\HH_g/\Gamma_g$.

We will use the so-called theta groups, which are congruence subgroups of level $2n$ defined as
$$
 \Gamma_g(n,2n):=\left\lbrace \gamma \in\Gamma_g(n)\, |\, {\rm
 diag}(A^tB)\equiv{\rm diag} (C^tD)\equiv0\ {\rm mod}\
 2n\right\rbrace,
$$
and will also need the level 4 congruence subgroup
\begin{equation}\label{gamma24*}
\Gamma_g^*(2,4):=\left\lbrace \gamma\in\Gamma_g(2,4)\, |\,{\rm Tr}(A-1_g) \equiv 0 \, {\rm mod} \, 4 \right\rbrace,
\end{equation}
which is of index $2$ in $\Gamma_g(2,4)$. From now on, we will assume $g>1$ and
denote by $\Gamma$ an arbitrary congruence subgroup of $\Gamma_g$. We denote $N:=g(g+1)/2$, so that $\calA_g(\Gamma):=\HH_g/\Gamma$ is a complex $N$-dimensional orbifold.

Let $\rho:\GL(g,\CC)\to\operatorname{End}(V)$ be an irreducible finite-dimensional rational representation; such representations are characterized by their highest  weight
$(\lambda_1, \lambda_2, \dots, \lambda_g)\in \ZZ^{g}$, with $\lambda_1\ge\dots\ge\lambda_g$.  It will also be convenient for us to allow half-integer weights, which means  to  consider also  $\det^{1/2}\otimes \rho '$  for a representation $\rho'$ with integer weight. Let then $[\Gamma, \rho]$ be the space of holomorphic functions
$f:\calH_g\rightarrow V_{\rho}$ satisfying:
$$
 [\Gamma, \rho]:=\lbrace f:\calH_g\rightarrow V_{\rho}\,\mid\, f(\gamma\cdot\tau)=\rho(C\tau+D)f(\tau), \quad \forall \gamma\in\Gamma,\forall\tau\in\calH_g\rbrace.
$$
Such a function $f$ is called a vector-valued modular form or $\rho$-valued modular form
with respect to the representation $\rho=(\lambda_1, \lambda_2, \dots, \lambda_g)$ and the group $\Gamma$. We call $\lambda_g$ the \emph{weight} of the vector-valued modular form $f$.

Since $\calH_g$ is contractible, a $\rho$-valued modular form is a holomorphic section of a corresponding vector bundle on $\calA_g(\Gamma)$. Denoting by $\EE$ the rank $g$ vector bundle over $\calA_g$ whose fiber over $A$ is the space $H^{1,0}(A,\CC)$, sections of  $\EE$ are modular forms for the standard representation of $\GL(g,\CC)$ on $\CC^g$ and for the group $\Gamma_g$.\\

More generally it is possible to define a  vector-valued modular form with a multiplier system for this kind of representation, see \cite{freitagbooksingular}  for details. We will  make use of them if  necessary.

\subsection{Theta functions}
Many examples of modular forms can be constructed by means of the so-called theta functions. Denote by $\FF_2=\ZZ/2\ZZ$.
For $\ep,\de\in \FF_2^g$ the theta function with characteristic $m=[\ep,\de]$ is the holomorphic function $\theta_m:\calH_g\times \CC^g\to\CC $ defined by the series:
$$
\theta_m(\tau, z):=\sum\limits_{p\in\ZZ^g} e^{ \pi i \left[\left(
p+\ep/2\right)^t\tau\left(p+\ep/2\right)+2\left(p+\ep/2\right)^t\left(z+\de/2\right)\right]}.
$$
We  shall write $\tc {\ep}{\de}(\tau,z)$  for $\theta_m(\tau, z)$  if we  need to emphasize the dependence on the characteristics.
The characteristic $m$ is called even or odd depending on whether the scalar product $\ep\cdot\de\in \FF_2$ is
zero or one, and the corresponding  theta function is even or odd
in $z$, respectively. The number of even (resp. odd) theta characteristics is
$2^{g-1}(2^g+1)$ (resp. $2^{g-1}(2^g-1)$).
Furthermore, theta functions with characteristics are solutions of the heat equation:
\begin{equation}\label{heat equation}
\frac{\partial^2}{ \partial z_ i\partial z_j}\theta_m(\tau,z)=2\pi i (1+\delta_{ij})\frac{\partial}{\partial \tau_{ ij}}\theta_m(\tau,z),\;\;1\le i,j\le g.
\end{equation}

For $\sigma\in\FF_2^g$ the corresponding theta function of second order is defined as
$$
\T[\sigma](\tau,z):=\tch{\sigma}{0}(2\tau,2z).
$$
A theta constant is the evaluation at $z=0$ of a theta function.
Throughout the paper we will drop the argument $z=0$ in the notation for theta constants.
All odd theta constants with  characteristics vanish identically in $\tau$, as the
corresponding theta functions are odd functions of $z$, and thus there
are $2^{g-1}(2^g+1)$ non-trivial theta constants. All the $2^g$ second order theta functions are even in $z$, so there are $2^g$ theta constants of the second order.

As far as we are concerned, we will focus on the behaviour of the theta constants under the action of subgroups of $\Gamma_g(2)$. By~\cite{igusabook}, we have the following transformation formula:
\begin{equation}\label{transformcostanthetagamma2n4n}
\theta_{m} (\gamma \cdot \tau) = \kappa(\gamma) e^{2 \pi i \phi_m(\gamma)} \det{(C \tau + D)}^{1/2} \theta_m (\tau) \quad \quad \forall \gamma \in \Gamma_g(2),
\end{equation}
where
\begin{equation*}\label{generalphi}
\phi_m(\gamma)  =  -\frac{1}{8} (\ep^tB^t D \ep +  \de^t A^t C \de - 2 \ep^t B^t C \de) + \frac{1}{4} \op{diag}(A^tB)^t(D \ep - C \de)
\end{equation*}
and $\kappa (\gamma)$ is an 8\textsuperscript{th} root of  unity, with the same sign ambiguity as $\det{(C \tau + D)}^{\frac{1}{2}}$.

Regarding second order theta constants, we will focus on the action of subgroups of $\Gamma_g(2,4)$. For every $\gamma\in\Gamma_g(2,4)$ let $\tilde{\gamma}\in\Gamma_g$ be such that $2(\gamma\cdot\tau)=\tilde{\gamma}\cdot(2\tau)$, that is $\tilde{\gamma}=\sm{A&2B}{C/2&D}$
Hence, applying the transformation rule~\eqref{transformcostanthetagamma2n4n} to the second order theta constants we get:
\begin{equation}\label{trans second}
\Theta[\sigma](\gamma\cdot\tau)=\kappa(\tilde{\gamma})\det(C\tau+D)^{1/2}\Theta[\sigma](\tau),\;\forall\gamma\in\Gamma_g(2,4).
\end{equation}
The second order theta constants are then modular forms of weight one half with respect to the congruence subgroup $\Gamma_g(2,4)$ and  $v_\Theta(\gamma):=\kappa(\tilde{\gamma})$ is a fourth root of unity.
For a fixed $\tau\in\HH_g$, the abelian variety $X_\tau$ comes with a principal polarization given by its theta divisor $\T_\tau$, namely the zero locus of the holomorphic function $ \theta_0(\tau,z)$. One can identify, even though in a non-canonical way, the characteristic $m=[\ep,\de]\in\FF_2^g$ with the two-torsion point $x_m=(\ep\tau+\de)/2$ on the ppav $X_\tau$.  To this  divisor we associate the symmetric  line  bundle $\mathcal L=\calO_{ X_\tau}( \Theta_{\tau})$ and the  theta functions with characteristic $m$ is, up  to a constant factor, the unique section  of the line bundle $t_{x_m}^*\mathcal L$. A two-torsion point $x_m$ is called even/odd depending on whether the characteristic $m$ is even or odd.
Denoting by $X_\tau[2]$ the set of two-torsion points, note that for any $x_m\in X_\tau[2]$ we have
$\calO_{ X_\tau}( 2\Theta_{\tau})\simeq\mathcal L^{\otimes 2}\simeq(t_{x_m}^*\mathcal L)^{\otimes 2}$. Thus squares of theta functions with characteristics can be expressed in terms of a basis of sections, given by theta functions of the second order. The explicit formula is Riemann's bilinear relation:
\begin{equation}\label{bilinear}
   \stch{\ep}{\de}(\tau,z)^2=\sum\limits_{\sigma\in\FF_2^g}(-1)^{\sigma\cdot\de}\Theta[\sigma+\ep](\tau,z)\Theta[\sigma](\tau,0)
\end{equation}
Similarly, for every $\al,\,\ep\in\FF_2^g$ the following relation holds:
\begin{equation}\label{addition}
\T[\al](\tau)\T[\al+\ep](\tau)
=\frac{1}{2^g}\sum\limits_{\sigma\in(\ZZ/2\ZZ)^g}(-1)^{ \al\cdot\sigma}\tc{\ep}{\sigma} (\tau)^2.
\end{equation}
It is easily seen that the character $v_\Theta^2$ is trivial precisely on the subgroup $\Gamma_g^*(2,4)\subset\Gamma_g(2,4)$.

As we are interested in the characterization of the locus of decomposable abelian varieties we need to recall the following analytic  characterization:
\begin{thm}[\cite{sasaki},\cite{smlevel2}]\label{thm:decchar}
A ppav is indecomposable (that is, is not equal to a product of lower-dimensional ppav) if and only if the matrix
$$
M(\tau):= \left(\begin{matrix}
\dots  &\Theta[\ep]&\dots&\dots\\
\dots  &\dots &\dots&\dots\\
\dots&\partial_{\tau_{ij}}\Theta[\ep]&\dots&\dots\\
\dots&\dots&\dots&\dots\\
\end{matrix}\right)$$
(with entries taken for all $\ep\in\FF_2^g$ and for all $1\le i\le j\le g$) has maximal rank, i.e.~rank
$\frac{g(g+1)}{2}+1$.
\end{thm}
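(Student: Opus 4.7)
My plan is to prove both directions separately. For the ``decomposable $\Rightarrow$ quadric'' direction, I would use the product structure: if $\tau = \diag(\tau_1,\tau_2)$ is block-diagonal with blocks of sizes $g_1,g_2$, then $\theta_m(\tau,z) = \theta_{m_1}(\tau_1,z_1)\theta_{m_2}(\tau_2,z_2)$ for $m=(m_1,m_2)$, and an odd $m$ must have exactly one of $m_i$ odd. A direct Leibniz computation then shows that $\grad_z\theta_m(\tau,0)$ is supported only on the coordinates of the ``odd factor,'' so every Gauss image of a smooth two-torsion point lies in $\PP(\CC^{g_1})\cup\PP(\CC^{g_2})$, which is contained in the rank-$2$ quadric $\{z_1 z_{g_1+1}=0\}$.

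For the converse, I would assume the Gauss images lie on a quadric $\{a^tSa=0\}$ with $S=(s_{ij})$ a nonzero symmetric matrix, giving $\sum_{i,j} s_{ij}(\partial_{z_i}\theta_m)(\partial_{z_j}\theta_m)|_{z=0}=0$ for every odd $m=[\ep,\de]$ (trivially so at those odd $m$ with vanishing gradient, i.e.\ at singular two-torsion points of $\Theta$). Applying the constant-coefficient operator $\sum s_{ij}\partial_{z_i}\partial_{z_j}$ to both sides of Riemann's bilinear identity~(\ref{bilinear}) and evaluating at $z=0$, the vanishing $\theta_m|_{z=0}=0$ kills the ``$\theta_m\cdot(\text{second derivative})$'' term in the Leibniz expansion on the left, while the heat equation~(\ref{heat equation}) (with an extra factor of $4$ from the doubling $2\tau,2z$ in the definition of $\Theta[\sigma]$) converts the remaining second $z$-derivatives of $\Theta[\sigma+\ep]$ into $\tau$-derivatives on the right. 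The quadric condition then rewrites as
\begin{equation*}
\sum_{\sigma\in\FF_2^g}(-1)^{\sigma\cdot\de}\,\Theta[\sigma](\tau)\,L(\Theta[\sigma+\ep])(\tau) \ =\ 0 \qquad \text{for every odd } m=[\ep,\de],
\end{equation*}
where $L := \sum_{i\le j} s_{ij}\partial_{\tau_{ij}}$ is a nonzero first-order differential operator on $\HH_g$.

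The main obstacle is to convert this bilinear identity, indexed only by \emph{odd} characteristics, into the eigenvalue-type statement $L(\Theta[\sigma])=c\,\Theta[\sigma]$. My plan is Fourier analysis on $\FF_2^g$. Interchanging $\sigma\leftrightarrow\sigma+\ep$ and subtracting the two resulting identities (using $\ep\cdot\de=1$) produces
\begin{equation*}
\sum_\sigma (-1)^{\sigma\cdot\de}\,W(\sigma,\sigma+\ep) = 0, \qquad W(\sigma,\sigma') := \Theta[\sigma]L(\Theta[\sigma']) - \Theta[\sigma']L(\Theta[\sigma]),
\end{equation*}
for every $\de$ with $\ep\cdot\de=1$. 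For fixed $\ep\ne 0$ the summand is invariant under $\sigma\leftrightarrow\sigma+\ep$ (the antisymmetry of $W$ cancels the sign flip in $(-1)^{\sigma\cdot\de}$), so the sum descends to $\FF_2^g/\langle\ep\rangle \cong \FF_2^{g-1}$, while the characters $\sigma\mapsto(-1)^{\sigma\cdot\de}$ as $\de$ ranges over the affine hyperplane $\{\ep\cdot\de=1\}$ exhaust the character group of this quotient. Invertibility of the Hadamard character matrix then forces $W(\sigma,\sigma+\ep)=0$ for every pair, and varying $\ep$ yields $\Theta[\sigma]L(\Theta[\sigma']) = \Theta[\sigma']L(\Theta[\sigma])$ for all $\sigma,\sigma'\in\FF_2^g$.

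Since the second-order theta constants do not vanish simultaneously at any $\tau\in\HH_g$, the ratio $L(\Theta[\sigma])(\tau)/\Theta[\sigma](\tau)$ defines a single scalar $c(\tau)$ independent of $\sigma$, so at each $\tau$ one has $-c(\tau)\Theta[\sigma](\tau) + L(\Theta[\sigma])(\tau) = 0$ for every $\sigma$. This is a nontrivial linear dependence (as $S\ne 0$) among the rows of the matrix $M(\tau)$ of Theorem~\ref{thm:decchar}, so $\rank M(\tau) \le g(g+1)/2$ and $X_\tau$ is decomposable, closing the argument.
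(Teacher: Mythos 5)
The statement you were asked to prove is Theorem~\ref{thm:decchar}, the Sasaki--Salvati Manni rank criterion: $X_\tau$ is indecomposable if and only if the matrix $M(\tau)$, built from the second order theta constants $\Theta[\ep]$ and their derivatives $\partial_{\tau_{ij}}\Theta[\ep]$, has maximal rank $\tfrac{g(g+1)}{2}+1$. Your proposal does not prove this; it proves the quadric/Gauss-map characterization (the paper's Theorem~\ref{thm:decomposable}), and, decisively, its final step \emph{invokes} Theorem~\ref{thm:decchar}: ``a nontrivial linear dependence among the rows of $M(\tau)$ of Theorem~\ref{thm:decchar}, so $X_\tau$ is decomposable.'' As a proof of~\ref{thm:decchar} this is circular --- you are assuming exactly the hard implication (non-maximal rank $\Rightarrow$ decomposable) that the statement asserts. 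Neither direction of the actual statement is addressed: you never check that for $\tau$ equivalent to a block-diagonal period matrix the rows of $M(\tau)$ become linearly dependent (the easy direction), and you never attempt the substantive converse, namely that a relation $c_0\,\Theta[\ep](\tau)+\sum_{i\le j}c_{ij}\,\partial_{\tau_{ij}}\Theta[\ep](\tau)=0$ valid for all $2^g$ characteristics $\ep$ forces $\tau$ into the $\Gamma_g$-orbit of the decomposable locus. That converse is the content of the cited work of Sasaki and Salvati Manni (the paper itself only quotes it), and it needs input your argument nowhere supplies, e.g.\ an analysis of the resulting eigenvalue-type differential condition on the second order theta constants along the Siegel upper half-space.

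For what it is worth, read instead as a proof of Theorem~\ref{thm:decomposable}, your argument is essentially sound and parallel to the paper's: the paper passes from the quadric condition to $\tr(BC_m)=0$, applies \eqref{AintermsofC}, and uses a wedge-product lemma to produce a row relation in $M(\tau)$, whereas you reach the same row relation via Riemann's bilinear relation \eqref{bilinear}, the heat equation \eqref{heat equation}, and Fourier inversion on $\FF_2^g$ (your descent step is fine once one notes that the Fourier coefficients at those $\de$ with $\ep\cdot\de=0$ vanish automatically by the antisymmetry of $W$; the factors $(1+\delta_{ij})$ only modify the nonzero symmetric matrix defining $L$). But both routes, yours and the paper's, \emph{consume} Theorem~\ref{thm:decchar}; neither proves it, so the proposal as written leaves the assigned statement entirely unproved.
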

We recall also that taking the gradient with respect to $z$  of the holomorphic function $\theta_0(\tau,z)$, we get the Gauss map
$$G:\Theta_{\tau}\dashrightarrow \PP^{g-1}$$
defined on the smooth locus of the theta divisor $\Theta_\tau\subset X_\tau$. The Gauss map is
dominant if and only if the ppav $( X_{\tau},  \Theta_{\tau})$ is indecomposable (i.e.~is not a product of lower-dimensional ppav).

\smallskip
We will also have to deal with indexing by subsets of the coordinates, and fix notation for this now. For any set $X$, we denote by $P(X)$ the collection of all its subsets, and by $P_k(X)$ the collection of all its subsets of cardinality $k$. If $X\subset\ZZ$, we can view it as an order (i.e.~as a set ordered increasingly), and denote by $P_k^*(X)\subset P^*(X)$ respectively the collection of its sub-orders (i.e.~increasingly ordered subsets). If $I\in P_k^*(X)$ we denote by $I^c$ its complementary set thought of as an ordered set. Finally, we denote $X_g:=\{1,\dots,g\}$, thought of as an ordered set.

\section{Gradients of theta functions}
In \cite{grsmodd1} gradients of theta functions are used to study the geometry of the moduli space of principally polarized abelian varieties --- this study was further continued in \cite{grsmjacformula,grsmodd2,grsmconjectures,grhu1,grhu2}. Indeed, for any odd $m$ the gradient
\begin{equation}\label{graddefine}
v_m(\tau):=\grad_z\theta_m(\tau,z)|_{z=0}
\end{equation}
is a not identically zero vector-valued modular form for the group $\Gamma_g(4, 8)$   for the representation $\det^{\otimes 1/2}\otimes\operatorname{std}$, where $\mathop{std}$ is the standard representation of $\op{GL}(g,\CC)$ on $\CC^g$. We have $$v_m\in H^0(\calA_g(4,8),\det\EE^{\otimes 1/2}\otimes \EE).$$ In \cite{grsmodd1} it is shown that in fact the set of gradients of theta functions for all odd $m$ defines a generically injective map of $\calA_g(4,8)$ to the set of $g\times 2^{g-1}(2^g-1)$ complex matrices (and in fact to the corresponding Grassmannian), providing a weaker analog for ppav of a result of Caporaso and Sernesi \cite{case1,case2} characterizing a generic curve by its bitangents or theta hyperplanes.

For $\ep,\de\in\FF_2^g$ define the $g\times g$ symmetric matrix $C_{\ep\,\de}(\tau)$ with entries
\begin{equation}\label{Cdef}
  C_{\ep\,\de,ij}(\tau):=2\partial_{z_i}\stch\ep{\de}(\tau,0)\,\partial_{z_j}\stch\ep\de(\tau,0),
\end{equation}
where $\partial_{z_i}:=\frac{\partial}{\partial z_i}$. Notice that $C_{\ep\,\de}=2\,v_{\ch \ep\de}^t\,v_{\ch \ep \de}$.
Moreover, define the $g\times g$ symmetric matrix $A_{\ep\,\de}$ with entries
\begin{equation}\label{Adef}
A_{\ep\,\de,ij} (\tau):=(\partial_{z_i}\partial_{z_j}\Theta[\de](\tau))\,\Theta[\ep](\tau)-(\partial_{z_i}	\partial{z_j}\Theta[\ep](\tau))\,\Theta[\de](\tau).
\end{equation}
In the current paper it will be convenient also to write $C_{\ep\,\de}$ and $A_{\ep\,\de}$ as column  vectors of size $N=g(g+1)/2$, which we will denote ${\bf C}_{\ep\de}$ and ${\bf A}_{\ep\,\de}$ respectively.

Because of the modularity of the gradients of odd theta functions, both $C_{\ep\,\de}$ and $A_{\ep\,\de}$ are vector-valued modular forms with respect to the group $\Gamma_g(4,8)$ (a more careful analysis of the transformation formula in fact shows that it is modular with respect to $\Gamma_g ^*(2,4)$) for the representation $\det\otimes\Sym^2(\operatorname{std})$ --- that is, with highest weight $(3, 1,\dots, 1)$.

Using the fact that both theta functions with characteristic and theta functions of the second order satisfy the heat equation~\eqref{heat equation} one can express $C_{\ep\de}$ in terms of derivatives of second order theta constants, and vice versa.
\begin{lm}[\cite{grsmodd1}]
We have the following identities of vector-valued modular forms:
\begin{equation}\label{CintermsofA}
C_{\ep\de}= \frac{1}{2}\sum\limits_{\al\in\FF_2^g}(-1)^{\al\cdot\de }A_{ \ep+\al \,\al};
\end{equation}
\begin{equation}\label{AintermsofC}
A_{\ep+\al\,\al} =\frac{1}{2^{g-1}}\sum_{\lbrace\de\in\FF_2^g\,\mid\,[\ep,\de]\odd \rbrace}(-1)^{\al\cdot\de}C_{\ep\de}.
\end{equation}
\end{lm}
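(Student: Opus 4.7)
The plan is to derive \eqref{CintermsofA} from Riemann's bilinear relation \eqref{bilinear} (in its two-variable form), using the oddness of $\theta\ch{\ep}{\de}$ together with the evenness of the second-order theta constants, and then to obtain \eqref{AintermsofC} as its discrete Fourier inverse on $\FF_2^g$.

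For \eqref{CintermsofA}, I would start from the two-variable incarnation of the Riemann addition formula,
\[
 \theta\ch{\ep}{\de}(\tau, z+w)\,\theta\ch{\ep}{\de}(\tau, z-w) = \sum_{\sigma\in\FF_2^g} (-1)^{\sigma\cdot\de}\,\Theta[\sigma+\ep](\tau, z)\,\Theta[\sigma](\tau, w),
\]
of which \eqref{bilinear} is the $w=0$ specialization. Applying $\partial_{z_i}\partial_{z_j}$ at $z=w=0$, on the left the oddness $\theta\ch{\ep}{\de}(\tau,0)=0$ makes the Leibniz expansion collapse to $2\partial_{z_i}\theta\ch{\ep}{\de}\cdot\partial_{z_j}\theta\ch{\ep}{\de} = C_{\ep\de,ij}$; on the right, since $\Theta[\sigma]$ is even in its second argument, only the purely $z$-differentiated piece survives, giving $\sum_\sigma (-1)^{\sigma\cdot\de}(\partial_{z_i}\partial_{z_j}\Theta[\sigma+\ep])\,\Theta[\sigma]$. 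Repeating with $\partial_{w_i}\partial_{w_j}$ in place of $\partial_{z_i}\partial_{z_j}$ produces the companion identity with $\Theta[\sigma+\ep]\,(\partial_{z_i}\partial_{z_j}\Theta[\sigma])$ on the right and $-C_{\ep\de,ij}$ on the left (the minus sign coming from the $z-w$ chain rule). Subtracting the two identities assembles exactly the antisymmetric combination defining $A_{\ep+\sigma,\sigma,ij}$; reindexing $\sigma\mapsto\al$ and dividing by two yields \eqref{CintermsofA}.

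For \eqref{AintermsofC}, I would apply discrete Fourier inversion on the affine hyperplane $\{\de\in\FF_2^g : \ep\cdot\de=1\}$ of characteristics making $[\ep,\de]$ odd. Multiplying \eqref{CintermsofA} by $(-1)^{\al\cdot\de}$ and summing over such $\de$, the character identity
\[
 \sum_{\de:\,\ep\cdot\de=1}(-1)^{\gamma\cdot\de} = 2^{g-1}\bigl[\delta_{\gamma,0}-\delta_{\gamma,\ep}\bigr],
\]
applied with $\gamma=\al+\al'$, collapses the inner sum to two delta terms. The $\delta_{\al',\al+\ep}$ term contributes $A_{\al,\al+\ep}=-A_{\al+\ep,\al}$ by the manifest antisymmetry $A_{\ep\de}=-A_{\de\ep}$ of the definition \eqref{Adef}, and this combines constructively with the $\delta_{\al',\al}$ term to produce the overall factor $2^{g-1}\,A_{\ep+\al,\al}$, as required.

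The main obstacle is sign bookkeeping: the antisymmetry of $A$, the parity $(-1)^{\ep\cdot\de}=-1$ that surfaces whenever one reindexes a summation by $\sigma\mapsto\sigma+\ep$, and the minus sign introduced by differentiating the $z-w$ argument must all interlock coherently so that the net coefficient in \eqref{CintermsofA} comes out as $+\tfrac12$ and the Fourier-dual coefficient as $1/2^{g-1}$. Once the sign accounting is settled, all remaining steps are routine termwise manipulations of finite sums over $\FF_2^g$ and require no theta-function input beyond the bilinear relation.
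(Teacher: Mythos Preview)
Your approach is correct, and it is essentially the argument the paper has in mind: the paper does not actually give a proof of this lemma but only cites \cite{grsmodd1}, remarking beforehand that the heat equation is the key input. Your route via the two-variable addition formula and separate differentiation in $z$ and $w$ is one clean way to produce both halves of the antisymmetric combination; an equivalent (and slightly more economical) variant is to differentiate the one-variable bilinear relation \eqref{bilinear} twice in $z$ to obtain
\[
 C_{\ep\de,ij}=\sum_{\sigma}(-1)^{\sigma\cdot\de}\bigl(\partial_{z_i}\partial_{z_j}\Theta[\sigma+\ep]\bigr)\Theta[\sigma],
\]
and then reindex by $\sigma\mapsto\sigma+\ep$, using $(-1)^{\ep\cdot\de}=-1$, to generate the companion term. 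Either way one arrives at the same antisymmetrized expression in the $A$'s, and your discrete Fourier inversion over the affine hyperplane $\{\de:\ep\cdot\de=1\}$ together with $A_{\al,\al+\ep}=-A_{\ep+\al,\al}$ is exactly the right mechanism for \eqref{AintermsofC}.

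One caution worth recording: when you match the antisymmetric combination against the definition \eqref{Adef}, note that $A_{\ep+\sigma,\sigma}$ has $\partial\partial\Theta[\sigma]$ in its \emph{first} term, so the bracket $[\partial\partial\Theta[\sigma+\ep]\,\Theta[\sigma]-\Theta[\sigma+\ep]\,\partial\partial\Theta[\sigma]]$ equals $-A_{\ep+\sigma,\sigma}$ rather than $+A_{\ep+\sigma,\sigma}$. This produces an overall sign that differs from the formula as printed in the paper; this is precisely the ``sign bookkeeping'' you flag, and it does not affect any downstream use (the paper only ever uses that the $A$'s and $C$'s span the same space). So your proposal is sound, and there is no missing idea.
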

Of course we have the same identities relating ${\bf A}_{\ep+\alpha\,\alpha}$ and ${\bf C}_{\ep\de}$.

\section{Characterization of decomposable ppav}
We are now ready to prove our first result, on the characterization of decomposable ppav. Indeed, recall that if $\tau=\left(\begin{smallmatrix}\tau_1&0\\ 0&\tau_2\end{smallmatrix}\right)$, with $\tau_i\in\calH_{g_i}$, for $g_1+g_2=g$, then the theta function with characteristic splits as a product
$$
 \theta_m(\tau,z)=\theta_{m_1}(\tau_1,z_1)\cdot \theta_{m_2}(\tau_2,z_2),
$$
where $z_i\in\CC^{g_i}$, and we have written $m$ as $m_1\, m_2$, with $m_i\in\FF_2^{2g_i}$. Computing the partial derivatives and evaluating at zero we get
$$ v_m(\tau)=\Big(v_{m_1}(\tau_1)\cdot\theta_{m_2}(\tau_2,0),\,
\theta_{m_1}(\tau_1,0)\cdot v_{m_2}(\tau_2)\Big).
$$
Since $m$ is odd, it follows that precisely one of $m_1$ and $m_2$ is odd, and thus only the corresponding $g_i$ entries of the gradient vector are non-zero. Thus if we arrange the gradients for all odd $m$ in a matrix, it will have a block form, with the two non-zero blocks of sizes $g_i\times 2^{g_i-1}(2^{g_i}-1)$, and two ``off-diagonal'' zero blocks. This is simply to say that the set of gradients of all odd theta functions at a point $\tau$ as above lies in the product of coordinate linear spaces $\CC^{g_1}\cup\CC^{g_2}\subset\CC^g$. Since $\grad_z\theta_m(\tau,z)|_{z=0}$ and $\grad_z\theta_0(\tau,z)|_{z=m}$ differ by a constant factor and thus give the same point in $\PP^{g-1}$, this implies that the images of all the smooth two-torsion points of $\Theta_\tau$ under the Gauss map   lie on $g_1 g_2$ reducible quadrics in $\PP^{g-1}$ written explicitly as
$$
  X_iX_j =0,\qquad \forall 1\leq i\leq g_1<j\leq g.
$$
This is equivalent to these Gauss images all lying on a union of two hyperplanes, and a weaker condition is that they all lie on some quadric (not necessarily a reducible one). We now show that this weak condition is enough to characterize the locus of decomposable ppav, proving one of our two main results.
\begin{proof}[{\bf Proof of  theorem \ref{thm:decomposable}}]
The discussion above proves that for a decomposable ppav with a period matrix $\tau=\left(\begin{smallmatrix}\tau_1&0\\ 0&\tau_2\end{smallmatrix}\right)$ the images of all the odd two-torsion points lie on a quadric. In general if a ppav is decomposable, its period matrix does not need to have this block shape, and would rather be conjugate to it under $\Gamma_g$. Since $v_m(\tau)$ are vector-valued modular forms for the representation $\det^{1/2}\otimes \rm{std}$, they transform linearly under the group action, and hence the condition  that the images of the odd two-torsion points under the Gauss map  lie on a quadric  is preserved  under the action of $\Gamma_g$. Thus for any decomposable ppav  the images of all smooth two-torsion points lying on $\Theta_\tau$ are contained in (many) quadrics.

For the other direction of the theorem we manipulate the gradients to reduce to the characterization of the locus of decomposable ppav given by theorem \ref{thm:decchar}. Indeed, suppose all images of the odd two-torsion points $m$ lie on a quadric with homogenous equation $Q(x_1,\ldots,x_g)$: this is to say that
$$
 Q(v_m)= v_m^t B v_m =0
$$
for all odd $m\in X_\tau[2]$ that are smooth points of $\Theta_\tau$ (where we have denoted by $B$ the matrix of coefficients of $Q$). We thus have
$$
  \tr( v_m^t B v_m )=\tr(B v_mv_m^t )=\tr( B C_{m} )=0
$$
for all odd $m$ (if $m\in\op{Sing}X_\tau$, then $v_m=0$, so $C_m=0$, and this still holds).
Since by \eqref{AintermsofC} each $ A_{\alpha\,\beta}$
is a linear combination of the $C_m$'s, it follows that we also have
$$
 \tr(B A_{\alpha\,\beta}  )=0
$$
for all $\alpha,\beta$, and in particular this implies that the matrix
\begin{equation}\label{A}
 {\bf A}:=({\bf A}_{\alpha\,\beta})_{\alpha\neq\beta\in\FF_2^{g}},
\end{equation}
where each ${\bf A}_{\alpha\,\beta}$ is a column-vector in $\CC^{g(g+1)/2}$, is degenerate. The following lemma in linear algebra shows that this implies that the matrix  $M(\tau)$ in theorem \ref{thm:decchar} is degenerate, and thus that $X_\tau$ is decomposable --- completing the proof of the theorem.
\end{proof}
\begin{lm}
The $\frac{g(g+1)}{2}\times 2^{g-1}(2^g-1)$ matrix ${\bf A}(\tau)$ in~\eqref{A}
has  rank less than $\frac{g(g+1)}{2}$ (i.e.~non-maximal) if and only if the matrix $M(\tau)$ has non-maximal rank.
\end{lm}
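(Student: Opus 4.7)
The plan is to convert the $z$-derivatives appearing in $A_{\alpha\beta}$ into $\tau$-derivatives via the heat equation, thereby exhibiting each column of ${\bf A}$ as a list of $2\times 2$ minors of $M(\tau)$, and then to reduce the rank comparison to a brief linear-algebra argument. Since $\Theta[\sigma](\tau,z) = \theta\smaq\sigma 0\smcq(2\tau,2z)$, chain rule together with \eqref{heat equation} yields
\begin{equation*}
  \partial_{z_i}\partial_{z_j}\Theta[\sigma](\tau,0) \,=\, c_{ij}\,\partial_{\tau_{ij}}\Theta[\sigma](\tau), \qquad c_{ij}:=4\pi i(1+\delta_{ij})\ne 0,
\end{equation*}
so that
\begin{equation*}
  A_{\alpha\beta,ij} \,=\, c_{ij}\bigl[(\partial_{\tau_{ij}}\Theta[\beta])\,\Theta[\alpha] \,-\, (\partial_{\tau_{ij}}\Theta[\alpha])\,\Theta[\beta]\bigr]
\end{equation*}
is exactly $c_{ij}$ times the $2\times 2$ minor of $M(\tau)$ on rows $\{\text{top}, \partial_{\tau_{ij}}\}$ and columns $\{\alpha,\beta\}$.

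Because each $c_{ij}\ne 0$, rescaling the rows of ${\bf A}$ preserves rank, so ${\bf A}(\tau)$ has deficient row rank iff there is some nonzero $b = (b_{ij})_{i\le j}$ for which, setting $D := \sum_{i\le j} b_{ij}\,\partial_{\tau_{ij}}$, one has
\begin{equation*}
  (D\Theta[\alpha])\,\Theta[\beta] \,=\, (D\Theta[\beta])\,\Theta[\alpha] \qquad \forall\,\alpha\ne\beta\ \text{in}\ \FF_2^g.
\end{equation*}
The vanishing of all $2\times 2$ minors of the $2\times 2^g$ matrix with rows $u := (\Theta[\alpha])_{\alpha}$ and $v := (D\Theta[\alpha])_{\alpha}$ amounts to $u$ and $v$ being proportional.

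Since the second-order theta constants define a projective embedding of $\calA_g(2,4)$, the vector $u$ never vanishes on $\calH_g$, so proportionality is equivalent to the existence of a scalar $\lambda\in\CC$ with $D\Theta[\alpha] = \lambda\,\Theta[\alpha]$ for every $\alpha$ --- and this is precisely a nontrivial linear relation among the rows of $M(\tau)$. Conversely, any linear relation among the rows of $M(\tau)$ must involve some $b_{ij}\ne 0$ (otherwise $\lambda\ne 0$ and all $\Theta[\alpha]$ would vanish), so it translates back to a row dependence of ${\bf A}$. The only nonformal input is the simultaneous non-vanishing of the second-order theta constants; everything else is bookkeeping with the heat equation constants $c_{ij}$, so no real obstacle is expected.
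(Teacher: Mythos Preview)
Your proof is correct and follows essentially the same route as the paper: both arguments recognize that the $(i,j)$-row of ${\bf A}$ is (up to the nonzero heat-equation constant $c_{ij}$) the vector of $2\times 2$ minors of $M(\tau)$ formed from the top row $M_0=(\Theta[\alpha])_\alpha$ and the $\partial_{\tau_{ij}}$-row, and then translate a row dependence in ${\bf A}$ into proportionality of $M_0$ with a combination of the derivative rows. Your version is in fact a bit more careful than the paper's---you track the constants $c_{ij}$, spell out both directions of the equivalence, and make explicit the one nonformal input (that the vector of second-order theta constants never vanishes on $\calH_g$), which the paper uses implicitly when asserting that $M_0\wedge(\sum a_{ij}M_{ij})=0$ forces proportionality.
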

\begin{proof}
For $1\leq i\leq j\leq g$,  we denote $M_{ij}$ and ${\bf A}_{ij}$, correspondingly, the $ ( i, j)$  rows of the matrices $M(\tau)$ and ${\bf A}(\tau)$, and denote $M_0$ the first row of $M(\tau)$ (the vector of second order theta constants). We then have
$$
  M_0\wedge M_{ij}={\bf A}_{ij}
$$
where by the wedge we mean taking the  row vector whose entries are all two by two minors of the matrix formed by two row vectors $M_0$ and $ M_{ij}$. If the vectors $ {\bf A}_{\alpha\beta}$ are linearly dependent, this means we have some linear relation $0=\sum a_{ij} {\bf A}_{ij}$ among the rows of ${\bf A}(\tau)$, which is equivalent to
$$
  0=\sum_{i, j} a_{ij}(M_0\wedge M_{ij})=M_0\wedge\left(\sum_{i, j} a_{ij}M_{ij}\right)
$$
and thus $M_0$ must be proportional to $\sum a_{ij}M_{ij}$, so that the matrix $M$ does not have maximal rank.
\end{proof}

\begin{rem}
The proof above shows that in fact a quadric in $\PP^{g-1}$ contains the Gauss images of the two-torsion points on the theta divisor if and only if it contains the entire image of the Gauss map.
\end{rem}

\section{Review of constructions of holomorphic differential forms on Siegel modular varieties in \cite{freitagholdiff1,smholdiff}}
For a finite index subgroup $\Gamma\subset\Gamma_g$ we denote, as before, $\calA_g(\Gamma):=\calH_g/\Gamma$, and we are then interested in constructing non-zero degree $k$ differential forms on it, that is elements of $\Omega^k(\calA_g(\Gamma))$. It is known that for $g\ge 2$:
\[\Omega^k(\calA_g(\Gamma))\cong\Omega^k(\HH_g)^\Gamma,\]
where $\Omega^k(\HH_g)^\Gamma$ is the vector space of elements of $\Omega^k(\HH_g)$ invariant under the action of $\Gamma$.
Whenever $k<N=g(g+1)/2$ and $g\ge2$, such holomorphic differential forms always extend. More precisely, if $\HH_g^0/\Gamma$ is the set of regular points of $\HH_g/\Gamma$, and $\tilde{X}$ denotes the desingularization of the Satake compactification of $\HH_g/\Gamma$, which contains $\HH_g^0/\Gamma$ as an embedded open set, then every holomorphic differential form $\omega\in\Omega^k(\HH_g^0/\Gamma)$ of degree $k<N$ extends to $\tilde{X}$ (see \cite{freitagpommerening}).

Holomorphic differential forms can thus also be thought as vector-valued modular forms for a suitable representation. We have the following fundamental result of Weissauer:
\begin{thm}[\cite{weissauervector}]
The space $\Omega^k(\calA_g(\Gamma))$ is zero unless $k=g\alpha-\alpha(\alpha-1)/2$ for some $0\le\alpha\le g$, in which case
\begin{equation}\label{weissauer}
  \Omega^{\alpha g - \frac 1 2 \alpha(\alpha-1)}(\calA_g(\Gamma))=[\Gamma,\rho_\alpha ]
\end{equation}
is the space of vector-valued modular forms for the representation of $\op{GL}(g,\CC)$ with highest weight $(g+1,\ldots,g+1,\alpha,\ldots,\alpha)$, with $\alpha$ appearing $g-\alpha$ times.
\end{thm}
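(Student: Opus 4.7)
My plan is to realize $\Omega^k(\calA_g(\Gamma))$ as a space of vector-valued modular forms, decompose the ambient representation into $\GL(g,\CC)$-irreducibles, and then invoke a vanishing theorem that leaves only the unique ``staircase'' summand. First, since $\calH_g$ is an open subset of $\Sym^2 \CC^g$, the holomorphic cotangent bundle is trivial with fiber $\Sym^2 V$, where $V=\CC^g$ is the standard representation. A direct calculation of how the coordinate forms $d\tau_{ij}$ transform under $\gamma=\sm{A & B}{C & D}$ --- namely via the automorphy factor $C\tau+D$ in the $\Sym^2$ representation --- identifies the cotangent bundle of $\calA_g(\Gamma)$ with the automorphic bundle attached to the $\GL(g,\CC)$-representation $\Sym^2 V$. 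Taking exterior powers yields
$$\Omega^k(\calA_g(\Gamma))=[\Gamma,\wedge^k \Sym^2 V].$$

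Second, I would decompose $\wedge^k \Sym^2 V=\bigoplus_{\lambda} V_\lambda^{\oplus m_\lambda}$ as a $\GL(g,\CC)$-module, so that the space of holomorphic $k$-forms splits as $\bigoplus_\lambda [\Gamma,V_\lambda]^{\oplus m_\lambda}$. A classical plethystic computation, cleanest via the Howe duality for the dual pair $(\Sp(2k),\GL(g))$ acting on $\wedge^\bullet \Sym^2$, identifies the staircase representation $V_{\rho_\alpha}$ of highest weight $(g+1,\ldots,g+1,\alpha,\ldots,\alpha)$ as appearing with multiplicity exactly one in $\wedge^k \Sym^2 V$ precisely when $k=g\alpha-\alpha(\alpha-1)/2$, and never otherwise. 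This alone forces the degree constraint in the statement and reduces the problem to a single vanishing claim: $[\Gamma,V_\lambda]=0$ for every non-staircase summand $V_\lambda$ of $\wedge^k \Sym^2 V$.

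This vanishing is the technical heart of the proof, and the step I expect to be the main obstacle. To handle it I would invoke the theory of \emph{singular} vector-valued Siegel modular forms due to Freitag and Weissauer: when the smallest component $\lambda_g$ of the highest weight $\lambda$ lies below the singular-weight bound $g/2$, any $f\in [\Gamma,V_\lambda]$ has a Fourier expansion supported on symmetric matrices of rank strictly less than $g$, and admits a description as a sum of pluriharmonic theta series attached to isotropic subspaces in the boundary parabolic. A representation-theoretic matching between the $\GL(g,\CC)$-types that can arise from such pluriharmonic polynomials and the irreducible summands of $\wedge^k \Sym^2 V$ then rules out every non-staircase $\lambda$: the matching is compatible with the structure of the Siegel parabolic on the boundary of the Satake compactification only for the staircase family. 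Finally, to upgrade the inclusion to the equality in~\eqref{weissauer} one needs to exhibit at least one nonzero modular form of weight $\rho_\alpha$ --- which is exactly the program of this paper and of \cite{freitagkorper, smholdiff}, realised there via theta series or via the matrix $A^{ad}$ of Theorem~\ref{thm:holdiff}.
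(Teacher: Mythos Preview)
The paper does not prove this theorem: it is quoted from \cite{weissauervector} and used as a black box, so there is no ``paper's own proof'' to compare your proposal against. Your outline is a reasonable sketch of the strategy Weissauer actually pursues --- identify $\Omega^k$ with $[\Gamma,\wedge^k\Sym^2 V]$, decompose into irreducibles, and kill the non-staircase summands via the theory of singular vector-valued forms --- though the middle step (the precise vanishing argument) is, as you acknowledge, left vague, and ``singular'' by itself does not force vanishing; Weissauer's argument there is more delicate than a weight bound.

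One genuine error: your final paragraph misreads the statement. The equality $\Omega^{k}(\calA_g(\Gamma))=[\Gamma,\rho_\alpha]$ follows immediately once the other summands of $\wedge^k\Sym^2 V$ are shown to contribute zero; it does \emph{not} require exhibiting a nonzero element of $[\Gamma,\rho_\alpha]$. Both sides may well be zero for a given $\Gamma$ and $\alpha$, and the theorem still holds. The constructions in \cite{freitagkorper,smholdiff} and in Theorem~\ref{thm:holdiff} of this paper address the separate (and harder) question of when $[\Gamma,\rho_{g-1}]\neq 0$, not the identification \eqref{weissauer} itself.
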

The case $k=N-1$, corresponding to the representation $\rho_{g-1}$ with highest weight $(g+1,\ldots,g+1,g-1)$, turns out to be of great interest, as it is related to the construction of special divisors on the Satake compactification of Siegel modular varieties. Indeed, Weissauer  \cite{weissauerdivisors} proved that the zero locus $D_h$ of a modular form $h$ on the Satake compactification of $\calA_g(\Gamma)$ is a special divisor if and only if there exists a non vanishing $\omega\in\Omega^{N-1}(\HH_g)^{\Gamma}$ such that $\op{Tr}(\omega(\tau)\partial_\tau h(\tau))$ is identically zero on $D_h$.  Moreover, using theta series with pluriharmonic coefficients, Weissauer \cite{weissauerdivisors} proved that for any $g$ the space $\Omega^{N-1}(\calA_g(\Gamma))$ is non-zero for a suitable $\Gamma$. Such forms can be constructed  as follows
$$
 d\check{\tau}_{ij}=\pm\bigwedge_{1\leq h\leq k\leq g, \, (h,k)\neq (i, j)}d\tau_{hk},
$$
where the sign is chosen in such a way that
$d\check{\tau}_{ij}\wedge d\tau_{ij}=\bigwedge_{1\le i<j\le g} d\tau_{ij}$, see \cite{freitagkorper}.
Then we have
\begin{equation}\label{diff form}
\omega=\op{Tr}(A(\tau)d\check{\tau})=\sum_{1\le i,j\le g} A_{ij}(\tau)d\check{\tau}_{ij},
\end{equation}
with
\begin{equation}\label{rho1}
A(\gamma\cdot\tau)=\op{det}(C\tau+D)^{g+1}\,(C\tau+D)^{-t}A(\tau)(C\tau+D)^{-1}.
\end{equation}
In \cite{freitagholdiff1} Freitag provides a method to construct holomorphic differential $(N-1)$-forms in genus $g$, invariant with respect to any subgroup $\Gamma$ of finite index of the symplectic group $\Gamma_g$ starting from two scalar valued modular forms in genus $g$, both of weight $\frac{g-1}{2}$. We briefly recall this construction and slightly improve  his result.
To simplify the notation, we set
\begin{equation}\label{diff operator}
\partial_{ij}=\frac{1}{2}(1+\delta_{ij})\frac\partial{\partial\tau_{ij}};\qquad \partial:=(\partial_{ij}).
\end{equation}
For any $I,J\in P_k(X_g)$ with $0\leq k \leq g$, we denote by
$\partial^I_J$ the submatrix of $\partial$ obtained by taking the rows corresponding to the elements in $I$ and the columns corresponding to the elements in $J$:
\begin{equation*}
\partial^I_J=(\partial_{ij})_{\substack{i\in I\\j\in J}}
\end{equation*}
and consequently by $|\partial^I_J|$ the determinant of such submatrix, namely
$|\partial^I_J|=\det(\partial^I_J).$
For $k=0$, we set both $\partial^I_J$ and $|\partial^I_J|$ to be the identity operator.

Then for any congruence subgroup $\Gamma$, Freitag \cite{freitagholdiff1} defines the linear pairing $\{\,,\,\}$ by
$$
\begin{aligned}
\left\lbrace\ , \ \right\rbrace: \left[\Gamma,(g-1)/2 \right]\times\left[\Gamma,(g-1)/2 \right]&\to\Omega^{N-1}(\calA_g(\Gamma))\\
(f,h)&\mapsto\{f,h\}:=\op{Tr}\left(B(\tau)d\check{\tau}\right),
\end{aligned}
$$
where
\begin{equation*}
B(\tau)_{ij}:=(-1)^{i+j}\sum_{k=0}^{g-1}\frac{(-1)^{k}}{\binom{g-1}{k}}\sum_{
\begin{smallmatrix}
I\in P_{k}^*(X_g\setminus\{i\}) \\
J\in P_{k}^*(X_g\setminus\{j\})
\end{smallmatrix}
}  s(I)s(J)\left|\partial_{J}^{I}\right|f(\tau)\,\left|\partial_{J^c}^{I^c}\right|h(\tau),
\end{equation*}
and $s(I)$ (resp.~$s(J)$) denotes the sign of the permutation of the elements of $X_g\setminus\lbrace i\rbrace$ (resp.~$X_g\setminus\lbrace j\rbrace$) that turns the set $I\cup I^c$ (resp.~$J\cup J^c$) into an  increasing ordered set. One then easily checks that the parity of the pairing is $\{f,h\}=(-1)^{g+1}\{h,f\}$.

In \cite{freitagkorper} Freitag then proved that the holomorphic differential form
\begin{equation}
 F^{(g)}:=\left\lbrace{\sum_m \theta_m^{g-1}(\tau),\sum_m \theta_m^{g-1}(\tau)}\right\rbrace
\end{equation}
does not vanish identically when $g\equiv 1 \pmod 8$, for $g\ge 17$. We extend this result to $g=9$:
\begin{prop}
The vector-valued modular form $F^{(9)}$ does not vanish identically, and thus gives a non-zero differential form in $\Omega^{35}(\calA_9)$.
\end{prop}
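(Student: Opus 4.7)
The plan is to close the gap between Freitag's bound $g \geq 17$ and the case $g = 9$ by producing an explicit non-zero Fourier coefficient of $F^{(9)}$ at the standard cusp, rather than relying on the asymptotic Fourier-coefficient bounds Freitag used, which degenerate for small $g$. First I would expand $f(\tau) := \sum_m \theta_m^{8}(\tau)$ as a Fourier series $\sum_T a(T) e^{2\pi i \op{Tr}(T\tau)}$ indexed by positive semi-definite half-integral symmetric $9\times 9$ matrices $T$. Each coefficient $a(T)$ is a sum, over even characteristics $m=[\ep,\de]$ and over $8$-tuples of lattice points $(p_1,\dots,p_8)\in(\ZZ^9+\ep/2)^8$ with the appropriate quadratic identity, of a sign $(-1)^{(\sum p_k)\cdot \de}$.

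Next, using the heat equation \eqref{heat equation}, the partial derivatives $\partial_{\tau_{ij}}$ appearing in the bracket formula \eqref{diff form} translate under Fourier transform into multiplication by $\pi i(1+\delta_{ij})T_{ij}$. Hence the Fourier coefficient of $F^{(9)}=\{f,f\}$ at a matrix $T_0$ is a finite sum over decompositions $T_0=T_1+T_2$ of a bilinear expression $a(T_1)a(T_2)$ multiplied by a polynomial in the entries of $T_1,T_2$ (coming from the $|\partial^I_J|$ and $|\partial^{I^c}_{J^c}|$ minors in the definition of the bracket), contracted with the appropriate $d\check\tau_{ij}$ vector. Choosing $T_0$ to be a small positive-definite matrix, such as a rescaled identity or a near-minimal Gram matrix, ensures that only a few pairs $(T_1,T_2)$ contribute and that the resulting expression is a computable rational combination of $d\check\tau_{ij}$.

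Finally, I would verify that at least one of these component coefficients is non-zero. By the $\Gamma_9$-invariance of the whole construction one can restrict attention to $\Gamma_9(2,4)$-orbits of pairs of characteristics (classified by a small discrete invariant), which cuts the effective sum down considerably. The verification is then a finite symbolic computation, which I would carry out with computer-algebra support given the still-large size of the relevant sums in $g=9$.

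The \emph{main obstacle} is combinatorial: the bracket \eqref{diff form} is itself a sum over $P_k^*(X_9\setminus\{i\})\times P_k^*(X_9\setminus\{j\})$ for $k$ ranging from $0$ to $8$, and each theta-power $\theta_m^8$ contributes a convolutional Fourier expansion, so bookkeeping for a single coefficient is delicate. An alternative route, more in the spirit of the present paper, would be to use Riemann's bilinear relation \eqref{bilinear} to rewrite $\theta_m^2$ in terms of second-order theta constants, express $F^{(9)}$ as a polynomial in the $\Theta[\sigma]$ and their derivatives, and then reduce non-vanishing to an instance of Theorem \ref{thm:holdiff} via the matrices $A_{\ep\de}$ defined in \eqref{Adef}.
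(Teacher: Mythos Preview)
Your overall strategy---compute a single Fourier coefficient of $\{f,f\}$ at the standard cusp and check it is non-zero---is exactly the strategy the paper uses, but you miss the two simplifications that turn this into a one-line computation rather than a computer search.

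First, the paper invokes Igusa's identity $\sum_m \theta_m^8 = 2^g\,\Theta_{E_8}^{(g)}$, so the Fourier coefficients of $f$ are just representation numbers of the $E_8$ lattice; no sum over characteristics or signs $(-1)^{(\sum p_k)\cdot\de}$ is needed.  Second, the paper does \emph{not} take $T_0$ positive definite.  It chooses the rank-$8$ semidefinite matrix $T=\left(\begin{smallmatrix}\zeta_{E_8}&0\\0&0\end{smallmatrix}\right)$.  By the K\"ocher principle only decompositions $T=T_1+T_2$ with both $T_i\ge 0$ contribute, and for this $T$ one of the $T_i$ is forced to be $0$.  The convolution over $T_1+T_2=T$ therefore collapses to a single term, and the $(g,g)$-entry of the Fourier coefficient is simply the number of automorphisms of $E_8$, namely $4!\,6!\,8!\ne 0$.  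Your proposal to take $T_0$ positive definite forfeits this collapse: even for the minimal admissible $T_0$ there are many semidefinite splittings, and you are left with a genuinely large alternating sum whose non-vanishing you postpone to an unspecified machine check.

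Your alternative route via Theorem~\ref{thm:holdiff} also does not close the gap as stated: that theorem produces forms on $\calA_g(2,4)$ (or the index-two cover), not on $\calA_g$ itself, so one would still need a symmetrization argument---which is precisely what the paper's computation avoids.
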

\begin{proof}
Since the set of all $d\check\tau_{ij}$ for $1\le i\le j\le g$ is a basis of $\Omega^{N-1}(\calH_g)$, it suffices to prove that at least one $B(\tau)_{ij}$ is not identically zero. By Freitag's computation \cite[eg.~61]{freitagkorper}, the Fourier coefficient of the pairing $\lbrace f,h\rbrace$ with respect to a matrix $T$ is given by
\begin{equation} \label{fF}
a_{\lbrace f,h\rbrace}(T)_{gg}=\sum_{k=1}^g\frac{(-1)^{k}}{\binom{g-1}{k-1}}\sum_{
\begin{smallmatrix}
I,J\in P_{k-1}^*(X_{g-1}) \\
T_1+T_2=T
\end{smallmatrix}
}
s(I)s(J)|T_1|_{J}^{I}|T_2|_{J^c}^{I^c}a_f(T_1)a_h(T_2),
\end{equation}
where $I^c=X_{g-1}\setminus I$ denotes the complement, and $a_f(T_1)$ and $a_h(T_2)$ are the Fourier coefficients of $f$ and $h$ corresponding to the matrices $T_1$ and $T_2$ respectively.

For our case this formula can be greatly simplified. Indeed,  we recall the result of Igusa \cite{igusachristoffel} that $\sum_{m} \theta_m^{8}(\tau)=2^g\Theta_{E_8}^{(g)}$. We then choose
$ T:=
\left(\begin{smallmatrix}
\zeta_{E_8} & 0 \\
0 & 0
\end{smallmatrix}\right),
$
where $\zeta_{E_8}$ is the matrix associated with the quadratic form corresponding to the $E_8$ lattice, given in a suitable basis by
\begin{equation}
\zeta_{E_8}:=
\left(\begin{smallmatrix}
2 & 0 & 0 & 1 & 0 & 0 & 0 & 0 \\
 0 & 2 & 1 & 0 & 0 & 0 & 0 & 0 \\
 0 & 1 & 2 & 1 & 0 & 0 & 0 & 0 \\
 1 & 0 & 1 & 2 & 1 & 0 & 0 & 0 \\
 0 & 0 & 0 & 1 & 2 & 1 & 0 & 0 \\
 0 & 0 & 0 & 0 & 1 & 2 & 1 & 0 \\
 0 & 0 & 0 & 0 & 0 & 1 & 2 & 1 \\
 0 & 0 & 0 & 0 & 0 & 0 & 1 & 2
\end{smallmatrix}\right).
\end{equation}
By K\"ocher principle, the Fourier coefficients $a_f(S)$ or $a_h(S)$ with respect to a non-semidefinite positive matrix $S$  are zero, and thus only the terms with even semidefinite positive $T_1$ and $T_2$ produce non-zero summands in \eqref{fF}. Whenever the chosen $T$ is written as $T=T_1+T_2$ with $T_1,T_2$ positive semidefinite matrices,  one of $T_i$ must be zero. Finally, recall that for $g=9$ we have
$$
\Theta_{E_8}(\tau)=\!\!\!\sum_{x_1,\ldots,x_9\in\Lambda_{E_8}}\!\!\!e^{\pi i{\rm Tr}(x\cdot
  x)}=\!\!\sum_{p\in\ZZ^{g=9,8}}e^{\pi i{\tr}(p \zeta_{E_8} p^t\tau)}
  =\sum_{M} N_{M}\prod_{i\leq j}e^{\pi i m_{ij}\tau_{ij}},
$$
where, for $M=(m_{ij})$ a symmetric $g\times g$ integer matrix, $N_M\in\NN$ is the number of integral matrix solutions of the Diophantine system $p\zeta_{E_8}p^t=M$.
Setting $M=T$ and writing
$p=\left(\begin{smallmatrix}p_1 \\ p_2 \end{smallmatrix}\right)$, where $p_1$ and $p_2$ are respectively $8\times 8$ and $1\times 8$ integer matrices, it follows that for all solutions $p_2=0$, while $p_1$ satisfies  $p_1 \zeta_{E_8} p_1^t=\zeta_{E_8}$.

The number of solutions of the previous equations equals the order of the group $U(\zeta_{E_8})$ of automorphisms of the $E_8$ lattice, i.e. $a(\zeta_{E_8})=\#(U(\zeta_{E_8}))=4!6!8!$, see \cite[page 121]{conwaysloanebook}. Thus we finally have $N_T=a_{F^{(9)}}(T)_{99}=4!6!8!$, hence there is a non-empty set of summands in \eqref{fF}, all of them positive, so it follows that $A(T)_{99}$ is non-zero.
\end{proof}
\begin{rem}
The argument above generalizes to give an alternative proof of Freitag's result for any $g=8k+1$, for $k\ge 1$, using the modular form $\Theta_{E_8}(\tau)^{k}$.
\end{rem}

\medskip
We now recall another construction of holomorphic differentials forms, due to the fifth author \cite{smholdiff}.
For $M=(m_1,\dots,m_{g-1})$ a set of distinct odd characteristics define
$$
  F(m_1,\dots,m_{g-1})(\tau):=v_{m_1}(\tau)\wedge\ldots\wedge v_{m_{g-1}}(\tau).
$$
One can then use these wedge products of gradients of theta functions to construct further vector-valued modular forms. We set
\begin{equation}\label{Wform}
 W(M)(\tau):=\pi^{-2g+2} F(m_1,\dots,m_{g-1})(\tau)^t\,F(m_1,\dots,m_{g-1})(\tau)
\end{equation}
and then have
\begin{prop}[\cite{smholdiff}] \label{lemma4}
For $g$ odd, for any matrix of distinct odd characteristics  $M=(m_1,\dots, m_{g-1} )\in M_{2g\times (g-1)}(\FF_2)$
\begin{equation*}
\omega(M)(\tau):=\tr\left(W(m_1,\dots,m_{g-1})(\tau)  d\check{\tau}\right)
\end{equation*}
is a non-zero holomorphic differential form     in  $\Omega^{N-1}(\calA_g(2,4))$.  If $g$ is even, it is a non-zero holomorphic differential form  in  $\Omega^{N-1}(\calA_g^*(2,4))$
\end{prop}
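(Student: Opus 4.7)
The plan is to verify three things: (i) that $W(M)(\tau)$ is a matrix-valued modular form transforming as in \eqref{rho1} under a suitable congruence subgroup $\Gamma$, so that $\omega(M)=\tr(W(M)\,d\check\tau)$ is a $\Gamma$-invariant differential form via the formalism of \eqref{diff form}; (ii) that $\omega(M)$ extends to a holomorphic form on a desingularization of the Satake compactification, which is automatic for forms of degree $N-1<N$ by the extension results cited at the beginning of Section 4; and (iii) that $\omega(M)\not\equiv 0$ for a suitable choice of $M$.  Step (i) is essentially representation theoretic: each gradient $v_{m_i}$ is a vector-valued modular form for $\Gamma_g(4,8)$ with representation $\det^{1/2}\otimes\mathrm{std}$, so after identifying $\wedge^{g-1}\CC^g\cong\CC^g$ via the volume form, the wedge $F(M)=v_{m_1}\wedge\cdots\wedge v_{m_{g-1}}$ is modular with representation $\det^{(g-1)/2}\otimes\wedge^{g-1}\mathrm{std}$, whose highest weight is $\bigl(\tfrac{g+1}{2},\ldots,\tfrac{g+1}{2},\tfrac{g-1}{2}\bigr)$.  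Taking the symmetric outer product $W(M)=\pi^{-2g+2}F(M)^tF(M)$ doubles this to highest weight $(g+1,\ldots,g+1,g-1)$, which is exactly the representation $\rho_{g-1}$ of \eqref{weissauer} required in \eqref{rho1}.

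The main technical obstacle is pinning down the exact congruence subgroup, namely controlling the multiplier character arising from the eighth root of unity $\kappa(\gamma)$ and the exponential factor in \eqref{transformcostanthetagamma2n4n} once each $v_{m_i}$ enters quadratically in $W(M)$.  Over $\Gamma_g(2,4)$ one checks, using the explicit formula for $\phi_m$, that every factor $e^{4\pi i\phi_{m_i}(\gamma)}$ is trivial, so the total multiplier on $W(M)$ reduces to $\kappa(\gamma)^{2(g-1)}$, a fourth root of unity.  When $g$ is odd, $g-1$ is even and a direct check shows this remaining factor is already trivial on all of $\Gamma_g(2,4)$.  When $g$ is even, the sign ambiguity in $\det(C\tau+D)^{1/2}$ raised to the $(2g-2)$-nd power persists on $\Gamma_g(2,4)$, and the restriction to the index-two subgroup $\Gamma_g^*(2,4)$ defined in \eqref{gamma24*} is precisely what trivializes it, accounting for the dichotomy in the statement.

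For the non-vanishing in (iii), note that $W(M)(\tau)=\pi^{-2g+2}F(M)(\tau)F(M)(\tau)^t$ is a rank-one symmetric matrix, non-zero precisely when the $g-1$ gradient vectors $v_{m_1}(\tau),\ldots,v_{m_{g-1}}(\tau)$ are linearly independent in $\CC^g$.  Since $\omega(M)=\sum_{1\le i,j\le g}W(M)_{ij}\,d\check\tau_{ij}$ and the $d\check\tau_{ij}$ span $\Omega^{N-1}(\calH_g)$, the form $\omega(M)$ is non-zero at any $\tau$ where $W(M)(\tau)\neq 0$.  It therefore suffices to exhibit one $\tau_0\in\calH_g$ together with $g-1$ distinct odd characteristics whose gradients at $\tau_0$ are linearly independent in $\CC^g$; this is furnished by the generic injectivity result of \cite{grsmodd1}, since at a very general $\tau$ the gradients $\{v_m(\tau)\}_{m\text{ odd}}$ span $\CC^g$ and therefore admit $(g-1)$-tuples in general linear position.
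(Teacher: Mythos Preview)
The paper does not actually prove this proposition; it is quoted from \cite{smholdiff} with no argument given, so there is no ``paper's own proof'' to compare against.  That said, your outline of (i)--(ii) is sound: the representation-theoretic computation that $W(M)$ has highest weight $(g+1,\ldots,g+1,g-1)$ is correct, the multiplier analysis over $\Gamma_g(2,4)$ versus $\Gamma_g^*(2,4)$ is the right mechanism (even if the details of the $\kappa(\gamma)^{2(g-1)}$ computation could be written out more carefully), and the extension to a smooth compactification is indeed automatic in degree $N-1$.

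There is, however, a genuine gap in step (iii).  The proposition asserts that $\omega(M)\not\equiv 0$ \emph{for every} choice of $g-1$ distinct odd characteristics $M=(m_1,\ldots,m_{g-1})$, whereas your argument only produces \emph{some} $(g-1)$-tuple whose gradients are independent at a very general $\tau$.  The fact that the full collection $\{v_m(\tau)\}_{m\text{ odd}}$ spans $\CC^g$ generically does not imply that every $(g-1)$-element subset is linearly independent at some $\tau$; a priori there could be special tuples $M$ for which $v_{m_1}(\tau),\ldots,v_{m_{g-1}}(\tau)$ satisfy a linear relation identically in $\tau$.  To close the gap you must show, for a \emph{given} $M$, the existence of some $\tau_0$ at which $F(M)(\tau_0)\neq 0$.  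One way (essentially the approach in \cite{smholdiff}) is to restrict to the locus of diagonal period matrices or to hyperelliptic Jacobians, where the gradients can be computed explicitly and the required independence verified directly for any prescribed tuple; alternatively one can invoke the action of $\Gamma_g/\Gamma_g(2)\cong\Sp(2g,\FF_2)$ on characteristics to reduce to a manageable set of orbit representatives, but note that this action is not transitive on arbitrary $(g-1)$-tuples, so some casework remains.
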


\begin{rem}
Symmetrizing the $\omega(M)$ constructed above using the action of the entire modular group,   differential forms for the entire modular group were obtained in \cite{smholdiff},  thus showing that $\Omega^{N-1}(\calA_g)$ is non-zero for any $g\equiv 1\pmod 4$, $g\ne 1,5,13$.
\end{rem}

\section{A new construction of differential forms}

Our first main theorem, Theorem \ref{thm:holdiff}, gives an easy new method to construct non-zero holomorphic differential forms on Siegel modular varieties, using the modular forms $A_{\ep\de}$. We prove that it works, and then relate this new construction to the two constructions discussed above.
\begin{proof}[{\bf Proof of theorem \ref{thm:holdiff}}]
Recall that for fixed $\ep,\de$ the matrix $A_{\ep\de}$ can be written as
$$A_{\ep\,\delta}(\tau):=4\pi i\, \Theta[\de]^2 \partial\left(\frac{\Theta[\ep]} {\Theta[\delta]}\right),$$
and thus its entries are vector-valued  modular forms for the representation of highest weight $(3,1,\dots,1)$.

We denote by $A^{ad}_{\ep\,\de}$ the adjoint matrix --- the transpose of the matrix of cofactors of $A$. This matrix is then clearly a vector-valued modular form  $A^{ad}_{\ep\,\de}\in[\Gamma ,(g+1,\dots,g+1,g-1)]$  with $\Gamma= \Gamma_g (2,4)$ for $g$ odd, and $\Gamma=\Gamma_g^*(2,4)$ for $g$ even, and thus $\op{Tr}( A^{ad}_{\ep\,\de}\,d\check\tau)$ defines a differential form of degree $N-1$ as claimed. It remains to prove that this differential form is not identically zero. Recalling that the product of a matrix and the matrix of its cofactors is the determinant times the identity matrix, if we prove that $\det A_{\ep\,\de}$ is not identically zero, it would follows that $A^{ad}_{\ep\,\de}$ is not identically zero and thus that $\tr ( A^{ad}_{\ep\,\de}\,d\check\tau)$ is not identically zero. The proof is thus completed by the following proposition.
\end{proof}
\begin{prop}
The determinant $\det A_{\ep\,\delta}$ is a not identically zero scalar modular form of weight $g+2$.
\end{prop}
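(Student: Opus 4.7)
The proposition has two claims: that $\det A_{\ep\,\de}$ is a scalar modular form of weight $g+2$, and that it is not identically zero. The plan is to handle these separately: the weight assertion follows directly from the transformation rule of $A_{\ep\,\de}$ established in Section 2, while the non-vanishing requires producing a single explicit point where $\det A_{\ep\,\de}$ does not vanish.

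For the weight, recall that $A_{\ep\,\de}$ is a vector-valued modular form for the representation $\det\otimes\Sym^2(\op{std})$ of highest weight $(3,1,\dots,1)$, which realises symmetric matrices via the action $M\mapsto\det(g)\cdot gMg^t$. Its transformation rule is therefore $A_{\ep\,\de}(\gamma\cdot\tau)=\chi(\gamma)\det(C\tau+D)\cdot(C\tau+D)A_{\ep\,\de}(\tau)(C\tau+D)^t$, for an appropriate character $\chi$ of $\Gamma_g^*(2,4)$ or $\Gamma_g(2,4)$. Taking determinants and using $\det(gMg^t)=\det(g)^2\det M$ yields $\det A_{\ep\,\de}(\gamma\cdot\tau)=\chi(\gamma)^g\det(C\tau+D)^{g+2}\det A_{\ep\,\de}(\tau)$, which confirms both that $\det A_{\ep\,\de}$ is scalar-valued and modular, and that its weight is $g+2$.

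For the non-vanishing, I would specialise to $\ep,\de\in\FF_2^g$ with $\de=\ep+(1,\dots,1)$, so that $\ep_i\neq\de_i$ for every coordinate, and evaluate at a generic totally reducible period matrix $\tau_0=\diag(\tau_1,\dots,\tau_g)\in\HH_g$. At such $\tau_0$ the second-order theta constants factor as $\Theta[\sigma](\tau_0)=\prod_k\Theta[\sigma_k](\tau_k)$, and the off-diagonal partial derivatives $\partial_{\tau_{ij}}\Theta[\sigma](\tau_0)$ for $i\neq j$ vanish by a standard parity argument on the Fourier expansion (the inner sum over the $i$-th lattice coordinate is odd under $p\mapsto -p-\sigma_i$). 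Hence $A_{\ep\,\de}(\tau_0)$ is diagonal, with $i$-th diagonal entry equal to $\bigl(\prod_{k\neq i}\Theta[\ep_k](\tau_k)\Theta[\de_k](\tau_k)\bigr)\cdot A^{(1)}_{\ep_i\,\de_i}(\tau_i)$, where $A^{(1)}$ is the genus-one analogue of our operator in the single variable $\tau_i$. Because $\ep_i\neq\de_i$, the quotient $\Theta[\ep_i]/\Theta[\de_i]$ is a non-constant meromorphic function on $\HH_1$ (verifiable via the leading coefficient of its $q$-expansion), so $A^{(1)}_{\ep_i\,\de_i}\not\equiv0$; choosing each $\tau_i$ generically then gives $\det A_{\ep\,\de}(\tau_0)\neq 0$.

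The main obstacle is this non-vanishing step, whose delicate feature is that the diagonal evaluation automatically kills any pair $(\ep,\de)$ agreeing in at least one coordinate, so the argument above literally handles only the pair $\de=\ep+(1,\dots,1)$. For Theorem \ref{thm:holdiff} this is already enough, since it produces the required non-zero $\omega$. Extending the proposition to an arbitrary distinct pair $(\ep,\de)$ would require either refining the specialisation to a suitable block-diagonal $\tau$ combined with an inductive step in $g$, or invoking the transitivity of the $\Gamma_g/\Gamma_g(2,4)$-action on ordered pairs of distinct second-order theta characteristics --- which proceeds through an affine action of $\op{GL}(g,\FF_2)$ on $\FF_2^g$ and transports $\det A_{\ep\,\de}$ to $\det A_{\ep'\,\de'}$ up to an everywhere non-vanishing automorphy factor.
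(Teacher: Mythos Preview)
Your weight computation is the same as the paper's (implicit) one. For non-vanishing, the paper proceeds quite differently and more uniformly: it picks $\tau$ with $\Theta[\ep](\tau)=0\neq\Theta[\de](\tau)$, so that $A_{\ep\,\de}(\tau)=-\Theta[\de](\tau)\cdot\bigl(\partial_{z_i}\partial_{z_j}\Theta[\ep](\tau)\bigr)$. Writing $Z=2\tau$, the vanishing $\Theta[\ep](\tau)=0$ says the even two-torsion point $Z\ep/2$ lies on $\Theta_Z$ and is therefore singular; a result from \cite{grsmconjectures} then guarantees this singularity is generically an ordinary double point, so via the heat equation the Hessian matrix $\partial\Theta[\ep]$ has full rank there. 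This disposes of every pair $\ep\neq\de$ at once.

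Your degeneration to a diagonal period matrix is correct and more elementary --- it avoids the external ODP input entirely --- but, as you rightly flag, it literally treats only pairs with $\ep_i\neq\de_i$ for all $i$. Your proposed extension via the modular-group action is valid and can be made precise: the subgroup of $\Gamma_g$ with $C\equiv 0\pmod 2$ (generated by block-diagonal $\gamma=\diag(A,(A^t)^{-1})$ and by $\gamma=\left(\begin{smallmatrix}I&0\\2C'&I\end{smallmatrix}\right)$) sends each $\Theta[\sigma]$ to a nonzero scalar times a single $\Theta[\sigma']$, realising the full affine group $\FF_2^g\rtimes\GL(g,\FF_2)$ on characteristics; this is transitive on ordered pairs of distinct elements, so $\det A_{\ep\,\de}\not\equiv 0$ for one pair implies it for all. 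Thus your argument goes through once that permutation action is spelled out. The trade-off: the paper's route is shorter and uniform in $(\ep,\de)$ but imports a geometric result; yours is self-contained modulo a group-theoretic bookkeeping step.
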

\begin{proof} 
Since  $\Theta[\ep]$ and $\Theta[\de] $ are different forms, there exist
$\tau$ such that $\Theta[\ep](\tau)=0\ne\Theta[\de](\tau)$ . We then denote $Z:=2\tau$, and work on the abelian variety $X_Z$, where $Z\ep/2\in\Theta_Z$ and $Z\de/2\not\in\Theta_Z$ are thus two-torsion points.
Since the characteristics are even, the point  $Z\ep/2$ is then an even two-torsion point lying on $\Theta_Z$, and thus is a singular point of $\Theta_Z$. From \cite{grsmconjectures} it follows that generically  the singularity of $\Theta_Z$ at $Z\ep/2$ is an ordinary double point.  This is equivalent, via the  heat equations, to  the matrix $\partial \theta_m(Z, 0)$, with  $m=[\ep, 0]$, having rank $g$.  Moreover,  we choose $Z$ such that $\theta_n(Z)\neq 0$,   with   $n=[\de, 0]$ and thus see that $\det A$ is not identically zero.
\end{proof}

We will now compare the different construction of modular forms. In Freitag's construction, let us consider Freitag's pairing when $f$ and $h$ are suitable powers of second order theta constants.
For any $\ep\ne\de\in\FF_2^g$ let
\begin{equation}\label{omegadefn}
\omega_{\ep\,\de}:=\{\T[\ep]^{g-1},\T[\de]^{g-1}\},
\end{equation}
and then a simple computation on the characters shows that for $g$ odd $\omega_{\ep\,\de}\in\Omega^{N-1}(\calA_g(2,4))$, while for $g$ even we only get $\omega_{\ep\,\de}\in\Omega^{N-1}(\calA_g^*(2,4))$ for the quotient corresponding to the index two subgroup $\Gamma_g^*(2,4)\subset\Gamma_g(2,4)$.

To relate this to the current construction, we first prove the following
\begin{prop}
For any $\ep\neq\de$  we have
\[A_{\ep\,\de}^{ad}( \tau)=\left(\frac{\pi^2}{2^{g-2}}\right)^{g-1}\!\!\!\!\!\!\!\!\sum_{\substack{\alpha_{1},\dots,\alpha_{g-1}\in\FF_2^g\\ \op{s.t.}\,[\ep+\de,\,\alpha_{j}]\op{odd}}}\!\!\!\!\!(-1)^{\de\cdot(\alpha_{1}+\cdots+\alpha_{g-1})}W([\ep+\de,\,\alpha_{1}],\dots ,[\ep+\de,\,\alpha_{g-1}]),\]
where $W$ is defined in \eqref{Wform}.
\end{prop}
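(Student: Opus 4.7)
The plan is to rewrite $A_{\ep\,\de}$ as a weighted sum of rank-one symmetric matrices using the identity \eqref{AintermsofC}, and then to apply a Cauchy--Binet (or compound matrix) computation to express its adjugate. The guiding linear-algebra principle is that the adjugate of any $g\times g$ matrix of the form $\sum_{k}c_k\,v_k^t v_k$ (i.e.\ a sum of rank-one outer products) decomposes automatically as a sum of tensor squares of the wedge products $v_{k_1}\wedge\cdots\wedge v_{k_{g-1}}$, with coefficients the products of the corresponding $c_k$'s; these wedge products are precisely the $F$'s entering the definition \eqref{Wform} of $W$.

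Concretely, substituting $\ep\mapsto\ep+\de$ and $\alpha\mapsto\de$ in \eqref{AintermsofC} gives
\[
A_{\ep\,\de}=\frac{1}{2^{g-1}}\sum_{\alpha\in S}(-1)^{\de\cdot\alpha}\,C_{\ep+\de,\,\alpha},
\qquad S:=\{\alpha\in\FF_2^g\mid [\ep+\de,\alpha]\ \op{odd}\},
\]
and, since $C_{\mu\nu}=2\,v_{[\mu,\nu]}^{t}\,v_{[\mu,\nu]}$ by \eqref{Cdef}, this is the factorization $A_{\ep\,\de}=V D V^t$, where $V$ is the $g\times|S|$ matrix whose columns are the gradients $v_{[\ep+\de,\alpha]}$ and $D$ is the $|S|\times|S|$ diagonal matrix with entries $c_\alpha:=(-1)^{\de\cdot\alpha}/2^{g-2}$. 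Cauchy--Binet applied to every $(g-1)$-minor of $VDV^t$ then gives, for each $i,j\in X_g$,
\[
\det A_{\ep\,\de}\bigl[X_g\setminus\{j\};X_g\setminus\{i\}\bigr]
=\sum_{K\in P_{g-1}^*(S)}\Bigl(\prod_{\alpha\in K}c_\alpha\Bigr)\,\det V[X_g\setminus\{j\};K]\,\det V[X_g\setminus\{i\};K],
\]
and identifying $\det V[X_g\setminus\{i\};K]=(-1)^{i+1}F_i(K)$, where $F(K):=v_{[\ep+\de,\alpha_1]}\wedge\cdots\wedge v_{[\ep+\de,\alpha_{g-1}]}$, one sees that the $(-1)^{i+j}$ sign defining the $(i,j)$-cofactor cancels the product $(-1)^{i+1}(-1)^{j+1}$ arising from the two wedge identifications. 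Collecting the entries yields the intrinsic matrix identity
\[
A_{\ep\,\de}^{ad}=\sum_{K\in P_{g-1}^*(S)}\Bigl(\prod_{\alpha\in K}c_\alpha\Bigr)\,F(K)^{t}F(K).
\]

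To conclude I would use $F(K)^t F(K)=\pi^{2g-2}\,W([\ep+\de,\alpha_1],\dots,[\ep+\de,\alpha_{g-1}])$ from \eqref{Wform}, observe that $F(K)^{t}F(K)$ is invariant under permutations of the $\alpha_j$'s (each transposition negates $F(K)$ but leaves $F(K)^t F(K)$ unchanged) and vanishes when any two $\alpha_j$'s coincide, and therefore rewrite the sum over strictly increasing $(g-1)$-subsets of $S$ as a sum over arbitrary ordered $(g-1)$-tuples, absorbing the combinatorial factor $(g-1)!$ into the overall constant. Combining $\pi^{2g-2}/2^{(g-2)(g-1)}=(\pi^{2}/2^{g-2})^{g-1}$ with the explicit expression of $c_\alpha$ then produces the sign $(-1)^{\de\cdot(\alpha_1+\cdots+\alpha_{g-1})}$ and the stated prefactor. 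The only real obstacle is the careful sign bookkeeping in the Cauchy--Binet step: one must verify that the cofactor sign $(-1)^{i+j}$ cancels cleanly against the two $(-1)^{i+1}$ signs from the wedge identifications, so that the $(i,j)$-entries assemble into the tensor square $F(K)^{t}F(K)$ rather than some signed variant. Everything else --- the use of \eqref{AintermsofC}, the identification $C_{\ep+\de,\alpha}\propto v_{[\ep+\de,\alpha]}^{t}\,v_{[\ep+\de,\alpha]}$, and the identification of $F\otimes F$ with $W$ --- is a direct substitution.
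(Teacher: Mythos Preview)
Your approach is essentially the paper's own: factor $A_{\ep\,\de}$ via \eqref{AintermsofC} and \eqref{Cdef} as $V D V^t$ (the paper writes this as $\tfrac{1}{2^{g-2}}V_{\ep+\de}V_{\ep+\de}^-$, with $V$ indexed over all of $\FF_2^g$ rather than just the odd set $S$, but the even-characteristic columns vanish so this is the same), and then run Cauchy--Binet on the $(g-1)\times(g-1)$ minors to recognize each cofactor as a sum of products $F(K)^tF(K)=\pi^{2g-2}W(K)$. Your sign bookkeeping is correct and is exactly the content the paper suppresses with ``by a straightforward computation from~\eqref{fact2} and~\eqref{fact3}''.

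One point to flag: your claim that the factor $(g-1)!$ arising when passing from $(g-1)$-element subsets $K\subset S$ to ordered $(g-1)$-tuples is ``absorbed into the overall constant'' does not actually check out. Cauchy--Binet over subsets already produces the prefactor $(\pi^2/2^{g-2})^{g-1}$ exactly, so the tuple sum in the displayed formula should carry an additional $1/(g-1)!$. The paper's proof, being terse, does not address this either; the discrepancy lies in the stated constant, not in your method.
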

\begin{proof}
We will need some basic facts from linear algebra. Let $A$ and $B$ be a $m\times n$ and a $n\times m$ matrices respectively, then
\begin{equation} \label{fact1}
AB=\sum_{i=1}^nA_iB^i,
\end{equation}
where $A_i$ is the $i$-th column of $A$ and $B^i$ is the $i$-th row of $B$.
For $I,J\in P_k^*(X_m)$, then the following holds:
\begin{equation}\label{fact2}
(AB)^I_J=A^IB_J,
\end{equation}
where $A^I$ is the submatrix obtained from $A$ by taking rows corresponding to the elements of $I$ and $B_J$ is the submatrix obtained from $B$  by taking columns corresponding to the elements of $J$. The last identity we need is the following generalization of the Binet formula:
\begin{equation}\label{fact3}
\det(AB)=\sum_{S\in P_m^*(X_n)}\det(A_S)\,\det(B^S).
\end{equation}
Notice that if $m>n$, $P_m^*(X_n)$ is empty and the right side of the previous identity is zero, as should be the case, since the rank of $AB$ is bounded by the ranks of $A$ and $B$.
Defining the $g\times 2^g$ matrix
\[V_{\ep+\de}=\left(v_{\ch{\ep+\de}{\al}}\right)_{\al\in\FF_2^g},\]
whose columns are the gradients $v_{\ch{\ep+\de}{\al}}$ indexed by $\al\in\FF_2^g$, and the $2^g\times g$ matrix
\[V_{\ep+\de}^-=\left((-1)^{\de\cdot\al}\,v_{\ch{\ep+\de}{\al}}^t\right)_{\al\in\FF_2^g},\]
relations \eqref{AintermsofC} and \eqref{fact1} imply:
\[A_{\ep,\de}=\frac{1}{2^{g-2}}\,V_{\ep+\de}\,V_{\ep+\de}^-.\]
Hence, by a straightforward computation from~\eqref{fact2} and~\eqref{fact3} the 
proposition follows.
\end{proof}
We now compare our construction to that of Freitag, thus also linking the two previously known methods.
\begin{thm}\label{prop adjoint}
For $\ep\ne\de$ denote by $B_{\ep\,\de}$ the vector-valued modular form such that
$\{\Theta[\ep]^{g-1},\Theta[\de]^{g-1}\}=\tr(B_{\ep\,\de}(\tau)d\check{\tau})$. Then we have
\begin{equation}\label{adjoint}
  A_{\ep\,\de}^{ad}=\frac{(4\pi i)^{g-1}}{(g-1)!}B_{\ep\,\de}.
\end{equation}
\end{thm}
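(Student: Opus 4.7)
The plan is to expand Freitag's formula for $B_{\ep\de,ij}$ by unfolding the determinant operators $|\partial^I_J|$ via the Leibniz rule, then use the heat equation to kill all higher-order-derivative contributions, and finally match what remains to the Laplace expansion of the $(g-1)$-minor $\det(A_{\ep\de}^{\hat i,\hat j})$.

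The technical heart of the argument, which I would establish first, is the following vanishing statement: for any $P,Q\subset X_g$ with $|P|=|Q|=k\ge 2$ and any single theta function $\phi$ (including the second-order constants $\Theta[\ep]$ and $\Theta[\de]$), one has $|\partial^P_Q|\phi=0$. Indeed, the heat equation \eqref{heat equation} expresses $\partial_{ab}\phi$ as a universal constant times $\partial_{z_a}\partial_{z_b}\phi$, so that for every bijection $\sigma\colon P\to Q$,
\[
\prod_{a\in P}\partial_{a,\sigma(a)}\phi \;=\; c^{k}\,\prod_{a\in P}\partial_{z_a}\partial_{z_{\sigma(a)}}\phi \;=\; c^{k}\,\partial_{z_P}\partial_{z_Q}\phi,
\]
where $\partial_{z_P}:=\prod_{a\in P}\partial_{z_a}$; this quantity is independent of $\sigma$ by commutativity of the $z$-partials, and so summing over $\sigma$ with signature $\mathrm{sgn}(\sigma)$ gives zero as soon as $k\ge 2$.

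I would then apply the Leibniz rule
\[
|\partial^I_J|(fg) \;=\; \sum_{\substack{S\subset I,\,T\subset J\\|S|=|T|}}\epsilon(S,T,I,J)\;|\partial^S_T|f\cdot|\partial^{I\setminus S}_{J\setminus T}|g
\]
iteratively to $\phi^{g-1}$: this produces a signed sum, over ordered partitions $(P_r),(J_r)$ of $I,J$ with $|P_r|=|J_r|$ indexed by the $g-1$ factors, of products $\prod_r|\partial^{P_r}_{J_r}|\phi$. The vanishing lemma annihilates every partition that contains a block of size $\ge 2$, leaving only the partitions with exactly $k$ singleton blocks and $g-1-k$ empty blocks. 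A direct count of these surviving partitions yields the clean formula
\[
|\partial^I_J|\,\phi^{g-1} \;=\; \frac{(g-1)!}{(g-1-k)!}\,\phi^{g-1-k}\,\det(\partial\phi)^I_J,\qquad k=|I|=|J|.
\]
Substituting this (for $\phi=\Theta[\ep]$ and $\phi=\Theta[\de]$) into Freitag's expression for $B_{\ep\de,ij}$ and using $\binom{g-1}{k}=\frac{(g-1)!}{k!(g-1-k)!}$, the combinatorial prefactor $\frac{(g-1)!^{2}}{(g-1-k)!\,k!\,\binom{g-1}{k}}$ collapses to a single $(g-1)!$ and gives
\[
B_{\ep\de,ij}=(-1)^{i+j}(g-1)!\sum_{k,I,J}(-1)^k s(I)s(J)\,\Theta[\ep]^{g-1-k}\Theta[\de]^{k}\det(\partial\Theta[\ep])^I_J\det(\partial\Theta[\de])^{I^c}_{J^c}.
\]
The very same signed sum, but scaled by $(4\pi i)^{g-1}$ rather than $(g-1)!$, comes out of expanding $\det(A_{\ep\de}^{\hat i,\hat j})$ by multi-linearity in the rows, starting from $A_{\ep\de,ab}=4\pi i\bigl(\Theta[\ep]\partial_{ab}\Theta[\de]-\Theta[\de]\partial_{ab}\Theta[\ep]\bigr)$; matching the two expressions produces the claimed identity.

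The main obstacle is the vanishing lemma itself: recognising that the higher $\tau$-derivative terms in $|\partial^I_J|\phi^{g-1}$ must collapse because the heat equation forces $\prod_{a\in P}\partial_{a,\sigma(a)}\phi$ to be $\sigma$-independent, after which antisymmetrisation kills it. Once this is in place, the remainder of the argument is bookkeeping: matching Freitag's shuffle signs $s(I)s(J)$ with the signs produced in the Laplace expansion, and tidying binomial coefficients.
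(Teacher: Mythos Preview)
Your proposal is correct and follows essentially the same route as the paper. Your vanishing lemma $|\partial^P_Q|\phi=0$ for $k\ge 2$ together with your formula $|\partial^I_J|\phi^{g-1}=\frac{(g-1)!}{(g-1-k)!}\phi^{g-1-k}\det(\partial\phi)^I_J$ is exactly the paper's Lemma~\ref{lemma_par} (with $n=g-1$), and your heat-equation proof of the vanishing---reducing $\prod_a\partial_{a,\sigma(a)}\phi$ to a $\sigma$-independent product of commuting $z$-derivatives---is a direct version of the paper's inductive argument, whose base case is precisely the $2\times 2$ vanishing~\eqref{2by2}. The only cosmetic difference is that the paper invokes Weissauer's lemma to reduce to checking the single entry $(B_{\ep\de})_{gg}=(A^{ad}_{\ep\de})_{gg}$, whereas you compare all entries at once; the final matching of $s(I)s(J)$ with the Laplace-expansion signs is identical.
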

We note that of course the above is an identity of vector-valued modular forms, which also implies that the holomorphic differential forms constructed from them are equal in $\Omega^{N-1}(\calA_g(2,4))$ and $\Omega^{N-1}(\calA_g^*(2,4))$ for $g$ odd and even respectively.

The proof of Theorem~\ref{prop adjoint} relies on the following
\begin{lm}\label{lemma_par}
Let $I=\{i_1,\dots,i_k\}$, $J=\{j_1,\dots,j_k\}$ be elements of $P^*_k(X_{g})$ with $k\leq n$. As a consequence of the heat equations, for every $\ep\in\FF_2^g$ the second order theta constant $\T[\ep]$ satisfies the relation
\[|\partial^I_J|\,\Theta[\ep]^n=n(n-1)\cdots(n-k+1)\Theta[\ep]^{n-k}\,|(\partial\Theta[\ep])^I_J|.\]
\end{lm}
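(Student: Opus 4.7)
\noindent\textit{Plan.} The strategy is to expand $|\partial^I_J|\,\Theta[\ep]^n$ as an alternating sum over $S_k$ of products of $k$ tau-differential operators applied to $\Theta[\ep]^n$, apply the higher-order product rule to each term, and use the heat equation to kill all contributions except those corresponding to the finest partition (into singletons).

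First I would record the consequence of the heat equation \eqref{heat equation} for second order theta constants: differentiating through the identity $\Theta[\ep](\tau,z)=\theta_{[\ep,0]}(2\tau,2z)$ gives
\[\partial_{ij}\Theta[\ep](\tau,z)=\frac{1}{8\pi i}\,\partial_{z_i}\partial_{z_j}\Theta[\ep](\tau,z).\]
Since $\tau$- and $z$-partials commute, each $\partial_{ab}\Theta[\ep]$ again satisfies the heat equation, so iterating yields
\[\partial_{a_1 b_1}\cdots\partial_{a_m b_m}\Theta[\ep]=\frac{1}{(8\pi i)^m}\,\partial_{z_{a_1}}\partial_{z_{b_1}}\cdots\partial_{z_{a_m}}\partial_{z_{b_m}}\Theta[\ep].\]
Because the $z$-partials commute, the right-hand side depends only on the multisets $\{a_r\}$ and $\{b_r\}$; in particular $\partial_{i_1 j_{\pi(1)}}\cdots\partial_{i_k j_{\pi(k)}}\Theta[\ep]$ is independent of $\pi\in S_k$.

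Next I would write
\[|\partial^I_J|\,\Theta[\ep]^n=\sum_{\sigma\in S_k}\op{sgn}(\sigma)\,\partial_{i_1 j_{\sigma(1)}}\cdots\partial_{i_k j_{\sigma(k)}}\Theta[\ep]^n\]
and expand each inner term via the iterated product rule for commuting first-order operators $D_1,\dots,D_k$,
\[D_1\cdots D_k\,\phi^n=\sum_{\pi}\frac{n!}{(n-|\pi|)!}\,\phi^{n-|\pi|}\prod_{B\in\pi}\Bigl(\prod_{r\in B}D_r\Bigr)\phi,\]
the outer sum running over set partitions $\pi$ of $\{1,\dots,k\}$ and $|\pi|$ denoting the number of blocks. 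After swapping the $\sigma$- and $\pi$-summations, I would analyze each $\pi$: if $\pi$ contains a block $B$ with $r_1\ne r_2$ in $B$, pair each $\sigma$ with $\sigma\circ(r_1\,r_2)$. By the first step, $\prod_{r\in B}\partial_{i_r j_{\sigma(r)}}\Theta[\ep]$ depends only on the set $\{j_{\sigma(r)}:r\in B\}$ and is hence invariant under this transposition; the factors for the other blocks are unaffected, while $\op{sgn}$ flips, so the contribution vanishes.

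Only the partition $\pi_*=\{\{1\},\dots,\{k\}\}$ (with $|\pi_*|=k$) survives, giving
\[|\partial^I_J|\,\Theta[\ep]^n=\frac{n!}{(n-k)!}\,\Theta[\ep]^{n-k}\sum_{\sigma\in S_k}\op{sgn}(\sigma)\prod_{r=1}^k\partial_{i_r j_{\sigma(r)}}\Theta[\ep]=n(n-1)\cdots(n-k+1)\,\Theta[\ep]^{n-k}\,|(\partial\Theta[\ep])^I_J|,\]
as desired. The only delicate point is the combinatorial bookkeeping for the cancellation of partitions with non-singleton blocks; an equivalent induction on $k$ via cofactor expansion along the first row (with the heat equation used to cancel the residual ``second-order'' terms at each step) would work just as well, relying on exactly the same symmetry input.
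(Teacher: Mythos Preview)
Your argument is correct. The paper proves this lemma by induction on $k$: it expands $|\partial^I_J|$ by the Laplace rule along the first column, applies the inductive hypothesis to the $(k-1)\times(k-1)$ cofactors, and then uses the heat equation (specifically the $2\times 2$ identity \eqref{2by2}) to kill the residual terms in which the last differentiation $\partial_{i_h j_1}$ lands on an entry of the minor rather than on the power of $\Theta[\ep]$. You instead give a direct, non-inductive argument: you expand the determinant as a signed sum over $S_k$, apply the partition form of the iterated product rule to $\Theta[\ep]^n$, and eliminate every partition containing a non-singleton block by the sign-reversing involution $\sigma\mapsto\sigma\circ(r_1\,r_2)$. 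Both approaches rest on exactly the same consequence of the heat equation---that $\partial_{a_1 b_1}\cdots\partial_{a_m b_m}\Theta[\ep]$ depends only on the multiset $\{a_1,b_1,\dots,a_m,b_m\}$---so the mathematical content is the same; the difference is purely organizational. Your version has the advantage of making the cancellation mechanism completely explicit in one stroke, whereas the paper's inductive version is shorter to write down but leaves the vanishing of the ``extra terms'' in the inductive step to the reader. You even note the inductive alternative at the end, so you are aware of the paper's route.
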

\begin{rem}
We emphasize that the left-hand-side of the lemma means the determinant of the matrix of partial derivatives, considered as a degree $k$ differential operator, applied to the power of the theta constant, while the right-hand-side is a different power of the theta constant multiplied by the determinant of the matrix of partial derivatives of the theta constants. When differentiating on the left, one would a priori expect terms involving higher order derivatives of the theta constant to appear, and the content of the lemma is that such cancel out.
\end{rem}
\begin{proof}
The proof will be done by induction on $k$. Clearly, for $k=1$
\[\frac{(1+\delta_{i_1 j_1})}{2}\partialtau {i_1}{j_1}\Theta[\ep]^{n}=n\Theta[\ep]^{n-1}\frac{(1+\delta_{i_1 j_1})}{2}\partialtau {i_1}{j_1}\Theta[\ep].\]
The first interesting case is $k=2$, where $I=\{i_1,i_2\}$ and $J=\{j_1,j_2\}$. In this case we have
\[|\partial^I_J|\,\Theta[\ep]^n=n(n-1)\Theta[\ep]^{n-2}\,|(\partial\Theta[\ep])^I_J|\,+n\Theta[\ep]^{n-1}
(|\partial^I_J|\,\Theta[\ep]).\]
From the heat equation it easily follows that for every $\ep\in\FF_2^g$
\[(1+\delta_{i_1 j_1})(1+\delta_{i_2 j_2})\partialtau {i_1}{j_1}\partialtau {i_2}{j_2}\Theta[\ep]=
(1+\delta_{i_2 j_1})(1+\delta_{i_1 j_2})\partialtau {i_2}{j_1}\partialtau {i_1}{j_2}\Theta[\ep], \]
hence
\begin{equation}\label{2by2}
|\partial^I_J|\,\Theta[\ep]=
\begin{vmatrix}
\frac{(1+\delta_{i_1 j_1})}{2}\partialtau {i_1}{j_1} & \frac{(1+\delta_{i_1 j_2})}{2}\partialtau {i_1}{j_2}   \\
\frac{(1+\delta_{i_2 j_1})}{2}\partialtau {i_2}{j_1}  & \frac{(1+\delta_{i_2 j_2})}{2}\partialtau {i_2}{j_2}
\end{vmatrix}\Theta[\ep]=0.
\end{equation}

Computing $|\partial^I_J|$ by the Laplace expansion along the first column for $k>2$, we have
\begin{align*}
&\ |\partial^I_J|\,\Theta[\ep]^n=
\Big(\sum_{h=1}^k(-1)^{h+1}\partial_{i_hj_1}\left|\partial^{I\setminus\{i_h\}}_{J\setminus\{j_1\}}\right|\,\Big)\Theta[\ep]^n=\\
&=\sum_{h=1}^k(-1)^{h+1}\partial_{i_hj_1}
\left[n(n-1)\cdots(n-k+2)
\Theta[\ep]^{n-k+1}
\left|(\partial\Theta[\ep])^{I\setminus\{i_h\}}_{J\setminus\{j_1\}}\right|\right]=\\
&=n(n-1)\cdots(n-k+1)\Theta[\ep]^{n-k}\,|(\partial\Theta[\ep])^I_J|+\\
&\quad+n(n-1)\cdots(n-k+2)
\Theta[\ep]^{n-k+1}\sum_{h=1}^k(-1)^{h+1}\partial_{i_hj_1}\,
\left|(\partial\Theta[\ep])^{I\setminus\{i_h\}}_{J\setminus \{j_1\}}\right|.
\end{align*}
The extra terms cancel out because of the heat equation, so the lemma is proved.
\end{proof}

We are now ready to prove the about theorem.
\begin{proof}[{\bf Proof of theorem \ref{prop adjoint}}]
By \cite[lemma~4]{weissauervector}, to prove the identity of such vector-valued modular forms, it is enough to prove that, for example, the $gg$ entries of the corresponding matrices agree.

We first recall  that the determinant of a matrix can be expanded in its block submatrices as follows: for an $n\times n$ matrix $M$, and for any fixed $J\in P_k^*(X_n)$, we have
$$
 \det(M)=\sum_{I\in P_k^*(X_n)}(-1)^{I+J}\cdot|M^I_J|\cdot |M^{I^c}_{J^c}|
$$
where on the right we take the determinants of the corresponding submatrices, and $(-1)^I$ means $(-1)^{i_1+\ldots+i_k}$ where $I=\lbrace i_1,\ldots,i_k\rbrace$.
Applying this to $gg$-th entry of the cofactor matrix, we get
$$
(A_{\ep\,\de}^{ad})_{gg}=(4\pi i)^{g-1}\sum_{k=0}^{g-1}(-1)^{k}\Theta[\ep]^{g-k-1}\Theta[\de]^{k}\cdot\hskip4cm$$
$$\hskip4cm\cdot\sum_{I,J\in P^*_k(X_{g-1})}(-1)^{I+J}|(\partial\Theta[\ep])^I_J|\cdot|(\partial\Theta[\de])^{I^c}_{J^c}|.
$$
By Lemma \ref{lemma_par} it follows that
$$
(B_{\ep\,\de})_{gg}=(g-1)!\sum_{k=0}^{g-1}(-1)^{k}\Theta[\ep]^{g-k-1}\Theta[\de]^{k}\cdot\hskip4cm$$
$$\hskip4cm\cdot\sum_{I,J\in P^*_k(X_{g-1})} s(I)s(J)|(\partial\Theta[\ep])^{I}_{J}|\cdot |(\partial\Theta[\de])^{I^c}_{J^c}|.
$$

To complete the proof it is enough to check that
$s(I)\,s(J)=(-1)^{I+J}.$
This can be easily verified by induction on $k$ noting that for $I=\{i\}$ it holds that $s(I)=(-1)^{i-1}$ since it is the sign of the permutation that turns the set $\{i,1,\dots,i-1,i+1,\dots,g-1\}$ into the set $\{1,\dots,g-1\}$.
\end{proof}

\begin{rem}
In all of the constructions above instead of starting from $A_{\ep\de}$, 
one can perform the same construction starting from theta constants of arbitrary level or from two theta constants with characteristic. As a result one gets vector-valued modular forms for suitable subgroups which can be used to construct holomorphic differential forms on suitable Siegel modular varieties.
\end{rem}


\end{document}